\theoremstyle{plain}
\newtheorem{THEOREM}{Theorem}[section]
\newtheorem{corollary}[THEOREM]{Corollary}
\newtheorem{lemma}[THEOREM]{Lemma}
\newtheorem{proposition}[THEOREM]{Proposition}
\theoremstyle{definition}
\theoremstyle{remark}
\newtheorem{remark}[THEOREM]{Remark}
\newtheorem{claim}[THEOREM]{Claim}
\newcommand{\lem}[1]{Lemma~\ref{#1}}
\newcommand{\cor}[1]{Corollary~\ref{#1}}
\newcommand{\prop}[1]{Proposition~\ref{#1}}
\def \a {\alpha}
\def \d {\delta}
\def \D {\Delta}
\def \g {\gamma}
\def \e {\varepsilon}
\def \f {\varphi}
\def \l {\lambda}
\def \n {\nabla}
\def \s {\sigma}
\def \th {\theta}
\def \o {\omega}
\def \O {\Omega}
\def \bs {\boldsymbol{\sigma}}
\def \cH {\mathcal{H}}
\newcommand{\N}{\ensuremath{\mathbb{N}}}   
\newcommand{\Z}{\ensuremath{\mathbb{Z}}}   
\newcommand{\R}{\ensuremath{\mathbb{R}}}   
\newcommand{\T}{\ensuremath{\mathbb{T}}}   
\renewcommand{\S}{\ensuremath{\mathbb{S}}} 
\def \lan {\langle}
\def \ran {\rangle}
\def \p {\partial}
\def \ra {\rightarrow}
\def \bs {\backslash}
\DeclareMathOperator{\sign}{sign} %
\DeclareMathOperator{\diver}{div} %
\DeclareMathOperator{\curl}{curl} %
\DeclareMathOperator{\rres}{res} %
\def \eth {\vec{e}_\th}
\def \nor {\vec{n}}
\def \ef {\vec{e}_\phi}
\def \er {\vec{e}_r}
\def \const {\mathrm{const}}
\def \loc {\mathrm{loc}}
\def \Vol {\mathrm{Vol}}
\begin{document}

\title{Homogeneous solutions to the 3D Euler system}
\author{Roman Shvydkoy}
\address{Department of Mathematics, Statistics, and Computer Science
University of Illinois at Chicago
322 Science and Engineering Offices (M/C 249)
851 S. Morgan Street
Chicago, IL 60607-7045}
\email{shvydkoy@uic.edu}

\thanks{The work of the author is partially supported by NSF grants DMS-1210896 and DMS-1515705.}

\begin{abstract} We study stationary homogeneous solutions to the 3D Euler equation. The problem is motivated be recent exclusions of self-similar blowup for Euler and its relation to Onsager conjecture and intermittency. We reveal several new classes of solutions and prove rigidity properties  in specific categories of genuinely 3D solutions. In particular, irrotational solutions are characterized by vanishing of the Bernoulli function; and tangential flows are necessarily 2D axisymmetric pure rotations. In several cases solutions are excluded altogether. The arguments reveal geodesic features of the Euler flow on the sphere. We further show that in the case when homogeneity corresponds to the Onsager-critical state, the anomalous energy flux at the singularity vanishes, which is suggestive of absence of extreme $0$-dimensional intermittencies in dissipative flows.
\end{abstract}

\keywords{Euler equation, homogeneous solution, Onsager conjecture, Landau solution}

\subjclass[2010]{76B99, 37J45}

\maketitle
\section{Introduction} We study classification problem of stationary homogeneous solutions to the Euler system given by 
\begin{equation}\label{e:ES}
\begin{split}
V \cdot \n V + \n P &= 0 \\
\diver V &  = 0.
\end{split}
\end{equation}
This is a classical system that describes time independent motion of an incompressible ideal fluid in $\R^3$, where $V$ is the velocity field and $P$ is the pressure. Scaling symmetries of the system, namely $V \to a V(bx)$, $P \to a^2 P(bx)$, allow for possible existence of invariants which are homogenous solutions of the form
\begin{equation}\label{e:VP}
\begin{split}
V(x)  = \frac{v + f \nor}{|x|^\a}, \quad P(x) = \frac{p}{|x|^{2\a}}.
\end{split}
\end{equation}
Here $v$ is the tangent component of $V$ on the sphere $\S^2$, $f$ is normal ($\nor$ denotes the outward unit normal), and $p$ is the spherical pressure. We are only concerned with $C^1$-solutions (at least) for which $v,f,p \in C^1(\S^2)$ and the system \eqref{e:ES} can be understood classically in $\R^3\bs\{0\}$. 

Out motivation for studying homogeneous solutions, apart from purely academic standpoint, comes from three different sources. First, recent studies of self-similar blowup for the full dynamical Euler equation demonstrated that under a mild growth restriction on the profile $V$, $V$ necessarily behaves like $\frac{1}{|x|^\a}$ at infinity, see \cite{BrShv,ChShv}. This suggests that homogeneous solutions are the only ones that exist in the class of self-similar. Second, in the case $\a = \frac23$ (or $\a = \frac13$ in 2D), the field \eqref{e:VP} gains so-called Onsager critical regularity $B^{1/3}_{3,\infty}$ near the origin.  Such regularity allows for the energy balance law to break and it is relevant in newly emerged descriptions of turbulent flow (see \cite{onsager,ds,shv-lectures,isett,CS}). The distinctive feature of being singular only at one point makes homogeneous solution a viable candidate for a dissipative flow with extreme $0$-dimensional intermittency, \cite{CS}. Thirdly, in the context of viscous flows, where $\a=1$ is the only relevant scaling,  Landau revealed in 1944 a class of homogeneous solutions with axial symmetry,  \cite{landau,batchelor}. They appear naturally in studying jet flows emanating from a point source. Recently, V.~{\v{S}}ver{\'a}k demonstrated in \cite{sverak} that Landau solutions are the only ones in the class of all homogeneous solutions. This motivates us to look for similar rigidity properties in the inviscid case, which turn out to be abundant. The question of vanishing viscosity limit  also comes into focus and we address it in Section~\ref{s:landau}.

In recent work \cite{2d-homo} we studied homogeneous solutions in $\R^2$ and provided full classification in all cases except $\a \in ( - \frac13,\frac14)\bs \{0\}$. If embedded in $\R^3$ the solutions are $C^1$-smooth on the sphere only for $\a \leq -1$. In this range  we can provide a complete description which we summarize in Section~\ref{s:2D}. In this present paper we focus on genuinely 3D case. It appears that 3D solutions come in classes with manifestly rigid character, in contrast to 2D case. The main reason consist in the fact that $\S^2$ has trivial first DeRham group, while in $\S^1$ existence of harmonic fields results in the class of 2D circular rotational solutions and overall allows more topological freedom for other elliptic solutions to exist. As a consequence, we show that there are no $C^1$-smooth solutions for $\a=1$, \prop{p:a=1}\footnote{During the preparation of the paper the author was informed that this particular result also appeared independently in \cite{LX}. See Section~\ref{s:eq} for discussion.}. Furthermore, we exclude axisymmetric  solutions in the wider range $0<\a<2$ in \prop{p:noaxi}. In the way of our analysis we draw connection with the Landau solutions. We show that they necessarily have to loose regularity  for positive  values of $\nu>0$ in the process as they converge to Euler solutions. Several  new examples of genuinely 3D solutions are exhibited. Those are $2\frac12$-dimensional solutions obtained from 2D ones by attaching a passive third component, Section~\ref{s:212}; geodesic solutions with straight particle trajectories, in particular, parallel shear \eqref{e:psf1}, radial \eqref{e:rad}, and axisymmetric conical solutions with or without swirl \eqref{e:Vaxi}. The latter is a new class of explicit examples of stationary axisymmetric flow. In addition, we discover an important class of irrotational solutions obtained by setting $f=Y_l^m$, one of spherical harmonics, and $v = (1-\a)^{-1} \n f$.  Here $\a \in \Z \bs \{1\}$. This class is a direct analogue of the classical point vortices in 2D.  These are also the only smooth solutions we discovered that include positive values of $\a$. We then establish a number of rigidity results that give a simple characterization of the above constructed solutions. Namely, we show that the Bernoulli function $H = |v|^2 + f^2 + 2p$, which play a crucial role in all our analysis, vanishes for all irrotational flows, and for $\a \leq 2$ any solution with $H=0$ is necessarily irrotational. Recall that for a general steady state vanishing $H$ characterizes all Beltrami solutions. So, we observe exclusively the effect of homogeneity. Next, axisymmetric solutions with constant spherical pressure $p$ are necessarily geodesic and are all described by the class \eqref{e:Vaxi} and \eqref{e:rad}, see \prop{p:axip}. We found two first integrals for the $4\times 4$-system of ODE describing such solutions, which leads to a complete resolution in this particular case. Lastly, we establish rigidity of all tangential solutions: if $f = 0$ throughout, then there is an axis of rotation around which the solution is the 2D purely rotational state given by \eqref{e:rotCar}. This once again stresses the difference between 2D and 3D cases and reveals inherently geodesic nature of the Euler flow on the sphere.

In the Onsager-critical case of $\a = \frac23$ we prove that the solution, properly tapered at infinity, regains finite global energy $\|V\|_2 <\infty$, which introduces a physically reasonable force $F$ in the system \eqref{e:ES}, namely $F\in C^\infty_{\loc}$ and $|\n^k F(x) |\lesssim \frac{1}{|x|^{3+k}}$. The classical Onsager conjecture inquires whether such solutions may have anomalous energy flux, which in steady case amounts to nonvanishing work of force, $\Pi = \int V \cdot F dx$. We show via an approximation argument that in fact $\Pi = 0$. As argued in \cite{CS,shv-lectures} such solutions present an extreme case of intermittent state where energy flux concentrates on a $0$-dimensional set, namely the origin. In 2D, we observed in \cite{2d-homo} that the Hamiltonian structure of the reduced equations on the sphere $\S^1$ produces extra symmetry in solutions that ultimately causes vanishing of the flux. In 3D case such constructive explanation remains to be found, despite the fact that we can formally prove vanishing of the flux in this case also. Our overall message with regard to the Onsager case is that the Euler system may not support extremely intermittent dissipative solutions due to hidden symmetries. In comparison, all ``wild" solutions constructed with the use of the new convex integration technique (see \cite{ds,isett}) have no intermittency, with dimension of singularity set being $3$, the entire domain.

Based on the constructed examples in this paper, their rigidity, and the flavor of some of our arguments we conjecture that there are no $C^1$-smooth solutions in the range $\a >-1$, except the irrotational ones at $\a \in \Z\bs\{1\}$. We also claim that the maximal smoothness of solutions behaves like $C^{-\a}$ for $\a <0$.

\section{Equations on the sphere and examples}\label{s:eq}
The Euler system of equations \eqref{e:ES} for homogenous solutions reduces to the following system on $\S^2$ (see Appendix):
\begin{subequations} \label{e}
\begin{align}
(2-\a)f+ \diver v & = 0 \label{e1} \\
v \n f &= |v|^2+ \a f^2 + 2\a p \label{e2}\\
(1- \a)f v + v \n v & = -\n p \label{e3}.
\end{align}
\end{subequations}
We study solutions for which the system \eqref{e} can be understood classically, i.e. when all ingredients $v,f,p$ belong to $C^k(\S^2)$ for some $k \geq 1$. We call these cumulatively $C^k$-solutions. 
The system \eqref{e} can be written in a fixed spherical system of coordinates
\begin{equation}\label{}
\begin{split}
x&= \sin \phi \cos \th, \quad y = \sin \phi \sin \th, \quad z = \cos \phi, \\
v &=  a \ef + b \eth
\end{split}
\end{equation}
where $\ef, \eth$ are the vectors of standard orthonormal frame associated with $(\phi, \th)$,  as follows
\begin{equation}\label{e:sph}
\begin{split}
(2-\a) f + a_\phi + a \cot \phi\,+ b_\th(\sin\phi)^{-1}  & = 0\\
 a f_\phi +  b f_\th (\sin\phi)^{-1} &=  a^2+b^2 +\a f^2+ 2\a p  \\
(1-\a)f a \sin \phi + a a_\phi \sin \phi  +  b a_\th - b^2 \cos \phi & = -  p_\phi \sin \phi \\
(1-\a)f b \sin\phi  +  a b_\phi \sin \phi + b b_\th +  ab \cos \phi&= - p_\th
\end{split}
\end{equation}
where subindices stand for partial derivatives. This somewhat obscure form of the system will be useful in obtaining and classifying various special classes of solutions. 

Let us introduce an important geometric quantity that will play a crucial role in what follows. The restriction of the classical Bernoulli function $2P + |V|^2$ on the sphere take form $H = |v|^2 + f^2 + 2p: \S^2 \ra \R$. Multiplying \eqref{e3} with $v$ and using \eqref{e2} we obtain the following transport equation for $H$:
\begin{equation}\label{e:H}
v \n H = 2 \a fH.
\end{equation}
As a consequence of \eqref{e}, \eqref{e:H} we will obtain an exclusion of smooth solutions in the case $\a=1$. Note that this appears to be in complete contrast to the Navier-Stokes system, where $\a=1$ is the \emph{only} possible scaling for homogenous solutions to exist. Before we prove the result let us rewrite the momentum equation \eqref{e3} in terms of $H$. First, let us consider the vorticity of $v$, $\o = \curl v$, or formally, $\o = \star d v^\flat$ (we use standard notation for operations on a manifold, see Appendix). One can easily verify using normal coordinates that $v\n v - \frac12 \n |v|^2 = \o v^\perp$. Here, $\perp$ means rotation of $v$ counterclockwise by $90^\circ$ relative to the outward oriented normal, or formally, $v^\perp = (\star v^\flat)^\sharp$. We will drop superindecies $\flat, \sharp$ in the future for brevity. So, \eqref{e3}  becomes
\begin{equation}\label{e:H2}
(1-\a)f v + \o v^\perp = -\n(p + \frac12 |v|^2)
\end{equation}
and in terms of $H$,
\begin{equation}\label{e:H3}
f( (1-\a) v - \n f) + \o v^\perp = -\frac12 \n H.
\end{equation}

\begin{proposition}\label{p:a=1} There are no $C^1$-solutions to the system \eqref{e} for $\a = 1$.
\end{proposition}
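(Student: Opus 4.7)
The plan is to exploit the cancellation at $\a=1$ which makes the quantity $\psi := \frac{1}{2}|v|^2 + p = \frac{1}{2}(H-f^2)$ a first integral of the streamline flow of $v$ on $\S^2$. Indeed, dotting \eqref{e3} with $v$ annihilates the $(1-\a)f|v|^2$ term and yields $v\n\psi = 0$. Combined with \eqref{e2}, which at $\a=1$ reads $v\n f = H = f^2 + 2\psi$, this implies that along each integral curve $\gamma(t)=\phi_t(x)$ of $v$ the function $F(t):=f(\gamma(t))$ satisfies the Riccati equation $\dot F = F^2 + 2\psi_0$, with $\psi_0:=\psi|_\gamma$ a constant.

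The crux is to show $\psi\leq 0$ on $\S^2$. Let $x^*$ be a maximum point of $\psi$. If $v(x^*)=0$, then \eqref{e2} at $x^*$ reduces to $f(x^*)^2+2p(x^*)=0$, so $H(x^*)=0$ and $\psi(x^*)=-f(x^*)^2/2\leq 0$. If instead $v(x^*)\neq 0$, the orbit through $x^*$ is regular on the compact $\S^2$ and $F$ stays bounded; but with $\psi_0=\psi(x^*)>0$ the Riccati equation gives $\dot F\geq 2\psi_0>0$, forcing $F>0$ in finite time and then $\dot F\geq F^2$, which blows up in finite time --- a contradiction. Thus $\max\psi\leq 0$. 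Next, integrating \eqref{e2} over $\S^2$ with $\diver v = -f$ yields $\int_{\S^2} H\,dS = \int_{\S^2} v\n f\,dS = \int_{\S^2} f^2\,dS$, hence $\int_{\S^2}\psi\,dS=0$, which together with $\psi\leq 0$ forces $\psi\equiv 0$. Then $\n\psi\equiv 0$, and equation \eqref{e:H3} at $\a=1$ becomes $\o v^\perp \equiv 0$; continuity of $\o$ combined with the observation that $v\equiv 0$ on an open set kills its derivatives (hence $\o$) there, yields $\o\equiv 0$ throughout $\S^2$.

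Since $\S^2$ is simply connected and $v$ is irrotational, $v=\n g$ for some $g\in C^2(\S^2)$. With $\psi=0$ equation \eqref{e2} reduces to $v\n f=f^2$. Multiplying by $f^n$, integrating, and using $\diver v=-f$ gives
\[
\int_{\S^2} f^{n+2}\,dS \;=\; \int_{\S^2} f^{n}\,(v\n f)\,dS \;=\; \frac{1}{n+1}\int_{\S^2} v\n f^{n+1}\,dS \;=\; \frac{1}{n+1}\int_{\S^2} f^{n+2}\,dS,
\]
hence $\int_{\S^2} f^{n+2}\,dS=0$ for all $n\geq 1$; taking $n=2$ gives $f\equiv 0$, so $\D g=0$, $g$ is constant, and $V\equiv 0$. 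The main obstacle is the Riccati blow-up analysis at the maximum of $\psi$, particularly the need to dispatch the stagnation-point case separately via direct evaluation of \eqref{e2} at $x^*$.
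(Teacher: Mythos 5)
Your proof is correct, but it reaches the key intermediate conclusion $H=f=0$ by a genuinely different mechanism than the paper. The paper's argument is purely integral: it tests the transport law $v\n H=2fH$ against $f$, integrates by parts twice using $\diver v=-f$ and $v\n f=H$, and lands directly on $-\int H^2\,d\s=\tfrac13\int f^4\,d\s$, forcing $H=f=0$ in three lines. You instead isolate the streamline invariant $\psi=\tfrac12|v|^2+p$ (a genuine first integral only at $\a=1$, where the $(1-\a)f v$ term in \eqref{e3} drops), and obtain the pointwise sign $\psi\le 0$ by a Riccati blow-up argument along trajectories of the complete flow on the compact sphere, handling the stagnation-point case by direct evaluation of \eqref{e2}; a single integration by parts then upgrades $\psi\le 0$ to $\psi\equiv 0$, after which $\int f^4=\tfrac13\int f^4$ gives $f\equiv 0$. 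This dynamical route is closer in spirit to the paper's \lem{l:H<0} and its Riccati corollaries than to the paper's own proof of this proposition; it is longer and requires the case split at the maximum of $\psi$, but it buys pointwise sign information on the Bernoulli-type quantity before any integration, and makes transparent why $\a=1$ is special (the exact cancellation producing the first integral). The endgame is identical in both arguments: $H=f=0$ forces $\o v^\perp=0$ via \eqref{e:H2}/\eqref{e:H3}, whence $\o=0$ on $\overline{\{v\neq 0\}}$ and trivially on its open complement, so $v$ is closed and co-closed, hence harmonic as a $1$-form on $\S^2$ and therefore zero; your phrasing via $v=\n g$ with $g$ harmonic is an equivalent way of invoking the triviality of the first de Rham group.
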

\begin{proof}
In the case $\a = 1$, \eqref{e1}, \eqref{e2}, and \eqref{e:H} read
\begin{equation}\label{e:a=1}
f+ \diver v  = 0, \quad v \n f = H, \quad v \n H = 2 fH.
\end{equation}
Let us test the last one with  $f$  and integrate by parts:
\[
\int f v \n H d\s= \int f^2 H d\s- \int H v \n f d\s= 2 \int f^2 Hd\s.
\]
So, using the second in \eqref{e:a=1},
\[
\int f^2 H d\s= - \int H v \n f d\s= - \int H^2 d\s.
\]
Again, from the second equation in \eqref{e:a=1},
\[
\int f^2 H d\s= \int f^2 v \n f d\s= \frac13 \int v \n f^3 d\s= \frac13 \int f^4  d\s.
\]
We have obtained $- \int H^2 d\s= \frac{1}{3} \int f^4 d\s$.
So, $H = f = 0$. From \eqref{e:H3}, we obtain $gv = 0$. This implies that $d v = 0$ on the set where $v \neq 0$, and hence $d v = 0$ on the closure $\overline{\{ v \neq 0\}}$. But on the complement of the closure, $v = 0$ trivially. Consequently, $d v = 0$ throughout, and also $\d v = 0$. We conclude that $v$ is harmonic as a form, and hence $0$.
\end{proof}
As we noted in the introduction this result also appeared independently in \cite{LX}. The argument of \cite{LX} is based on analysis from the bulk of the fluid domain $\R^3$, as opposed to the sphere. However, in both cases the conclusion is finding that $v$ is harmonic.

\subsection{Geodesic solutions} Many explicit examples of homogeneous solutions have flow trajectories that are straight lines (or rays) in space. We call these geodesic solutions. The geodesic property can be expressed concisely by $V\cdot \n V \times V = 0$.  If the pressure $P$ is constant, then clearly $V$ is geodesic. Constant $P$ implies $p = 0$ for $\a \neq 0$, and $p = \const$ for $\a = 0$. In any case, $p$ disappears from the system \eqref{e}. Then \eqref{e3} implies that the orbits of $v$ on the sphere are geodesic too. One simple example is given by the parallel shear flow
\begin{equation}\label{e:psf1}
V = \lan 0,0, \frac{z(\th)}{r^\a} \ran,
\end{equation}
where $r = \sqrt{x^2+y^2}$ and $z\in C^1(\T)$. This is a $C^1$-smooth solution for $\a \leq -1$. It will be crucial to  catalogue solutions in terms of their spherical quantities, even if it may not always be most illuminating. It will help compare them with other solutions obtained solely in terms of $f,v$, etc. Thus, we have
\begin{equation}\label{e:psf2}
\begin{split}
f & = z(\th) \frac{\cos \phi}{\sin^\a \phi}, \quad b = 0,\quad a = - z(\th) \sin^{1-\a}\phi \\
p & = 0, \quad H = \frac{z^2(\th)}{\sin^{2\a}\phi}. 
\end{split}
\end{equation}

Another simple example is the radial flow
\begin{equation}\label{e:rad}
\a = 2, \quad f= \const, \quad v=0, \quad p= - \frac12 f^2.
\end{equation}
This solution is unique in several different categories.  It is the only one for which the tangential ingredient of $H$, $2p + |v|^2$ is constant (see \cor{c:a=2}); is the only axisymmetric solution in the scaling $\a = 2$ (see Section~\ref{s:axi}); and is the only solution in the general radial class. For the latter, if $v = 0$, then from \eqref{e1} we have $\a = 2$ for otherwise $f=0$ and the solution is trivial. Momentum equation \eqref{e3} implies $p = \const$, and hence from \eqref{e2} so is $f$. Note that this is an example of a geodesic solution for which the global pressure $P$ is not constant. 

A class of axisymmetric solutions with or without swirl can be constructed as follows. Let $a_0,b_0$ with $a_0^2 + b_0^2 = 1$ represent local spherical coordinates of the tangent field $v$ on the equator, $v_0 = a_0 \ef + b_0 \eth$ (see Appendix). Then $V = \lan V^x, V^y, V^z \ran$ is given by
\begin{equation}\label{e:Vaxi}
\begin{split}
V^x & = b_0^2 \frac{xz}{x^2 + y^2} K^{-\frac{\a}{2}} + b_0 \frac{y}{x^2+y^2} K^{1-\frac{\a}{2}}, \\
V^y & = b_0^2 \frac{yz}{x^2 + y^2} K^{-\frac{\a}{2}} - b_0 \frac{x}{x^2+y^2} K^{1-\frac{\a}{2}}, \\
V^z & = a_0^2 K^{-\frac{\a}{2}},\\
\a p & = 0.
\end{split}
\end{equation}
where 
\begin{equation}\label{}
K = \left\{ \begin{split}
a_0^2 (x^2+y^2) - b_0^2 z^2, & \quad b_0^2z^2 \leq a_0^2(x^2 + y^2),\\
0, & \quad  b_0^2z^2 > a_0^2(x^2 + y^2).
\end{split} \right.
\end{equation}
So, in this case the swirl $b_0$ determines the aperture of the cone where $V$ vanishes. Clearly, the solution with a swirl is $C^1$ only in the range $\a \leq -2$, and in the range $\a \leq -1$ without swirl. In both case, this also implies $p=0$. As shown in Proposition \ref{p:axip} these are unique solutions in the class of axisymmetric solutions with constant spherical pressure $p$.

\subsection{2D homogeneous solutions}\label{s:2D} A large class of solutions can be obtained by lifting the 2D homogeneous solutions into space.  The 2D case has been classified in \cite{2d-homo}. Let us give a brief recitation of the obtained results as it would provide some valuable insight into existing possibilities. In a fixed coordinate system $(x,y,z)$ the 2D homogenous solutions are  given by
\[
V(r,\th) = \frac{u_\th(\th) \eth + u_r(\th) \er}{r^\a}, \quad P(r,\th) = \frac{p(\th)}{r^{2\a}},
\]
where $\eth$ and $\er$ are unit basis vectors associated with the polar system. Such solutions gain $C^1$-regularity only for $\a \leq -1$ because of singular behavior at the poles. One can associate a stream-function to the field $V = \n^\perp \Psi$ given by $\Psi = r^{1-\a} \psi(\th)$,  $u_\th = (1-\a)\psi$, $u_r = -\psi'$. In our spherical system, we have
\begin{equation}\label{}
\begin{split}
f & = -\psi'(\th) \sin^{1-\a} \phi, \ a = - \psi'(\th) \frac{\cos\phi}{\sin^\a\phi},\ b= (1-\a) \frac{\psi(\th)}{\sin^{\a} \phi} \\
H & = \frac{2p+(1-\a)^2 \psi^2+(\psi')^2}{\sin^{2\a}\phi}.
\end{split}
\end{equation}

A complete classification of solutions in the range $\a \leq -1$ is given in \cite{2d-homo}. We will summarize the results as they would provide some valuable insight into existing possibilities. The Euler system forces $p$ to be constant, and satisfy 
\begin{equation}\label{e:2D}
\begin{split}
 - 2 \a p & =\a (\psi')^2 +(1-\a)^2 \psi^2 + (1-\a) \psi'' \psi,\\
\psi(0) & = \psi(2\pi).
\end{split}
\end{equation}
The ODE has a conserved quantity (coming from conservation of the Bernoulli function along particle lines), 
\begin{equation}\label{B}
B = (2p+(1-\a)^2 \psi^2+(\psi')^2)\psi^{\frac{2\a}{1-\a}}.\end{equation}
With this law system \eqref{e:2D} becomes a Hamiltonian system in phase variables $(x,y) = (\psi,\psi')$ given by
\begin{equation}\label{sys-B}
\left\{\begin{split}
		x' & = y\\
		y' & = -(1-\a)^2x +\frac{\a}{\a-1} B x^{\frac{\a+1}{\a-1}}.
\end{split}\right.
\end{equation}
with the pressure $p = - \frac{y^2}{2} - \frac{(1-\a)^2}{2} x^2 + \frac{B}{2} x^{\frac{2\a}{\a-1}}$ being the Hamiltonian. Thus, the question reduces to finding $2\pi$-periodic solutions. Explicit formulas for those solutions are not always available however we can classify and count all types of solutions that exist. Solutions with $\psi > 0$ have elliptic-type streamlines, therefore called elliptic, solutions with vanishing $\psi$ at two or more points have hyperbolic streamlines. Parabolic solutions don't exist in our range $\a \leq -1$. Elliptic ones correspond to $p>0$, $B>0$, while hyperbolic to $p<0$ and arbitrary $B$. Hyperbolic solutions always hit zero at the same slope up to a sign, namely, $\psi' = \pm \sqrt{-2p}$. Pieces of $\psi$ over sign-definite intervals can be separated, flipped, and glued together to form new solutions as long as they correspond to the same pressure $p$. Thus, hyperbolic pieces of  $\psi$ must alternate signs in order to produce $C^1$-solutions. So, classification in hyperbolic case reduces to finding time-span function $T=T(p,B)$ that measures the length of intervals of sign-definiteness of $\psi$. Rescaling $\psi$ by a constant allows to reduce the question to a fixed $p=-1,0,1$ or $B=-1,0,1$. 

In the elliptic case we have the following description.  Since, $p\geq 0$, then $B >0$. Rescale $B$ to $B=1$. Then for $p=0$ all solutions are parallel shear flows. For $p=p_{\mathrm{max}} = \frac{1}{2(1-\a)} \left( \frac{\a}{(\a-1)^3} \right)^{-\a}$ the solution is pure rotation, $\psi = \const$. For $0<p<p_{\mathrm{max}}$ in the range $-\frac72 \leq \a <-1$ there are no elliptic solutions. In the range $\a < - \frac72$ there are exactly $\# \{ (2,\sqrt{2(1-\a)}) \cap \N \}$ of non-trivial elliptic solutions. For $\a = -1$, the exceptional case, all solutions for $0<p<p_{\mathrm{max}}$ are $2\pi$-periodic and given explicitly by 
$\psi = \g_1 + \g_2 \cos(2\th)$, $p = 2(\g_1^2 - \g_2^2)$, and $\Psi = (\g_1+\g_2)x^2+ (\g_1-\g_2)y^2$. Thus all streamlines are perfect ellipses in this case. 

In the hyperbolic case, we rescale $p=-1$, so that all pieces can be stitched to form a $C^1$-solution. Then for $B>0$ we have $\frac{\pi}{1-\a} < T < \pi$ and $T$ changes monotonely; $B=\infty$ corresponds to already accounted parallel shear flow with $T = \pi$; $B = 0$, $T = \frac{\pi}{1-\a}$; and for $B<0$, $0<T<\frac{\pi}{\l}$. Clearly, there are infinitely many possibilities for $T$'s to add up to a full $2\pi$-period. Conversely, all hyperbolic solutions are obtained this way. 

The case $B=0$ is exceptional because in this case the vorticity $\o =  r^{-1-\a}( (1-\a)^2 \psi + \psi'')$ vanishes. The flow in the corresponding sector is irrotational. We will see that irrotational solutions are indeed unique in the class of solutions with vanishing  Bernoulli function in the range $\a \leq 2$.

\subsection{$2\frac12$D homogeneous solutions}\label{s:212} The classical way to construct a 3D solution out of a 2D solution $U = \lan u_1, u_2, 0 \ran$ is to attach a third component $Z$ which is transported along $U$. To satisfy homogeneity we set $Z = \frac{z(\th)}{r^\a}$. The transport requires $U \cdot \n Z = 0$. In terms of the stream-function this condition takes form  
\[
\a \psi' z + (1-\a) z' \psi = 0,
\]
and hence
\begin{equation}\label{}
|\psi|^\a|z|^{1-\a} = \const.
\end{equation}
The constructed solutions have the same constant spherical pressure as the underlying 2D solution. The other spherical quantities are superpositions of the previous two examples. In particular,
\begin{equation}\label{}
H  = \frac{2p+(1-\a)^2 \psi^2+(\psi')^2 + A |z|^{\frac{2\a}{\a-1}}}{\sin^{2\a}\phi},
\end{equation}
where $A>0$ is a constant.

\section{Irrotational solutions}

Let us first discuss the structure of vorticity. Let $ \O = \n \times V$ be the classical vorticity in $\R^3 \bs \{0\}$. Denote $u=(1-\a) v^\perp - \n^\perp f$. We have the following expression for $\O$: 
\begin{equation}\label{}
\O = \frac{1}{|x|^{\a+1}}(u + \o\, \nor).
\end{equation}
Since $\O$ is divergence-free, we obtain the relationship
\begin{equation}\label{}
 (1-\a) \o + \diver u = 0.
\end{equation}
In terms of $\O$, the Euler system takes form
\begin{equation}\label{e:OV}
\O \times V = - \frac{1}{2} \n (|x|^{-2\a} H).
\end{equation}
Reading off the normal and tangental part of this identity we obtain the following system
\begin{subequations}\label{e:uv}
\begin{align}
u \times v& = \a H \nor \label{e:uv1}\\
f u - \o v & = -\frac12 \n^\perp H.\label{e:uv2}
\end{align}
\end{subequations}
Here, equation \eqref{e:uv2} is clearly equivalent to \eqref{e:H3}, while equation \eqref{e:uv1} is in fact \eqref{e2} in disguise. It can be obtained from \eqref{e2} by using the identities $ v^\perp \times v = - |v|^2 \nor$, and $\n^\perp f \times v = -v\n f \nor$. At least when $\a \neq 0$ equation \eqref{e:uv1} reveals the obvious geometric interpretation of the Bernoulli function. It also implies that $H$ should vanish at some point, unless $\a =0$.

\begin{proposition} Suppose $v,f,p \in C^1(\S^2)$, and $\O = 0$. Then $\a \in \Z \bs \{1\}$ and the solution is given by   
\begin{equation}\label{e:irr}
(1-\a) v = \n f,\quad p = - \frac12 f^2 - \frac{1}{2(1-\a)^2} |\n f|^2,
\end{equation}
where $f$ is a constant multiple of one of the spherical harmonics $Y_l^m$, $1-\a = l$, $-l \leq m \leq l$. Moreover, in this case $H = 0$.
\end{proposition}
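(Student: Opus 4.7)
The plan is to extract everything from the decomposition $\Omega = |x|^{-\alpha-1}(u + \omega\, \nor)$ together with Proposition~\ref{p:a=1}. The hypothesis $\Omega = 0$ splits into its normal part $\omega = \curl v = 0$ and its tangential part $u = (1-\alpha)v^\perp - \n^\perp f = 0$. Rotating the latter identity by $-90^\circ$ yields $(1-\alpha)v = \n f$, which is the first half of \eqref{e:irr}. Division by $(1-\alpha)$ is legitimate since $\alpha = 1$ is excluded by Proposition~\ref{p:a=1}.

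Next I would feed $\diver v = (1-\alpha)^{-1}\Delta f$ into the mass equation \eqref{e1} to get the eigenvalue equation
\begin{equation*}
-\Delta f = (1-\alpha)(2-\alpha)\, f \quad \text{on } \S^2.
\end{equation*}
Since $-\Delta_{\S^2}$ has spectrum $\{l(l+1) : l \geq 0\}$ with eigenspaces spanned by $\{Y_l^m : -l \leq m \leq l\}$, the constraint $l(l+1) = (1-\alpha)(2-\alpha)$ (viewed as a quadratic in $l$) has discriminant $(2l+1)^2$ and integer roots $l = 1-\alpha$ and $l = \alpha - 2$. Both force $\alpha$ to be an integer, and together they recover exactly $\alpha \in \Z \setminus \{1\}$, identifying $f$ as an eigenfunction in the degree-$l$ eigenspace. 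This spectral rigidity of $\S^2$ is really the only nontrivial input; everything else is bookkeeping.

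The pressure formula and the vanishing of $H$ then come out mechanically. Substituting $(1-\alpha)v = \n f$ into \eqref{e2} turns the left-hand side into $(1-\alpha)|v|^2$, and rearrangement yields $\alpha\,(|v|^2 + f^2 + 2p) = 0$, i.e.\ $\alpha H = 0$. For $\alpha \neq 0$ this gives $H = 0$ directly, and combined with $|v|^2 = (1-\alpha)^{-2}|\n f|^2$ it produces the stated formula for $p$. One may equivalently read $\alpha H \nor = 0$ off \eqref{e:uv1} using $u = 0$. The only mild subtlety is the edge case $\alpha = 0$, where $p$ enters the system only through $\n p$ and is therefore defined up to an additive constant; in that case equation \eqref{e3}, combined with $\omega = 0$ and $v = \n f$, collapses to $\n(p + \tfrac12 f^2 + \tfrac12|\n f|^2) = 0$, and normalizing the constant to zero recovers \eqref{e:irr} and $H = 0$.
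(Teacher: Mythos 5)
Your proof is correct and follows essentially the same route as the paper: $u=0$ yields \eqref{e:irr}, taking the divergence and combining with \eqref{e1} produces the Laplace--Beltrami eigenvalue problem $-\Delta f=(1-\a)(2-\a)f$ whose spectrum $\{l(l+1)\}$ forces $\a\in\Z\setminus\{1\}$, and the pressure is read off from \eqref{e2}. You are in fact slightly more careful than the paper in tracking both roots $l=1-\a$ and $l=\a-2$ (the latter is what covers $\a\ge 2$) and in noting that for $\a=0$ the pressure is only determined up to an additive constant so that \eqref{e:irr} is a normalization there; the only cosmetic slips are that no division by $1-\a$ is actually needed to pass from $u=0$ to $(1-\a)v=\n f$, and that the discriminant $(2l+1)^2$ belongs to the quadratic viewed in $\a$, not in $l$.
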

\begin{proof} Vanishing of $\O$ immediately implies $u = 0$, which implies \eqref{e:irr}. By taking the divergence of \eqref{e:irr} and combining with \eqref{e1} we obtain the classical eigenvalue problem for the Laplace-Beltrami operator
\begin{equation}\label{e:Ha}
\Delta f = -(2-\a)(1-\a) f = -(l+1)l f.
\end{equation}
The description of $f$  follows automatically. The pressure is recovered directly from \eqref{e2}.
\end{proof}
Note that when $\a = 2$ the only irrotational flow is the radial one \eqref{e:rad}.

Let us  take the curl of \eqref{e:OV} in $\R^3 \bs\{0\}$. We obtain the classical  vorticity equation
\begin{equation}\label{e:OV2}
[\O, V] = 0.
\end{equation}
On the sphere it takes the form
\begin{subequations}\label{e:lie}
\begin{align}
u\n v - v\n u &= (1+\a) \o v - (2+\a) f u,\label{e:lie1} \\
v\n \o - u\n f &= f\o. \label{e:lie2}
\end{align}
\end{subequations}
Here \eqref{e:lie1} represents the tangential, and \eqref{e:lie2} represents the normal components of \eqref{e:OV2}. The latter is not independent -- it can also be obtained by taking the divergence of \eqref{e:lie1}.

\begin{proposition}\label{p:irrot}
For $\a \leq 2$ irrotational solutions are unique in the class of all $C^2$-smooth solutions with $H = 0$. For $\a=0$ irrotational solutions are unique in the class of all $C^2$-smooth solutions with $H=\const$. 
\end{proposition}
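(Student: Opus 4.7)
The hypothesis $H=0$ collapses the algebraic identities \eqref{e:uv1}--\eqref{e:uv2} to $u\times v=0$ and $fu=\o v$. Hence $u$ is pointwise parallel to $v$ on $\S^{2}$, and on the open set $U=\{v\neq 0\}$ I write $u=\l v$ with a $C^{1}$ scalar $\l=u\cdot v/|v|^{2}$ and $\o=f\l$ there. Equation \eqref{e2} under $H=0$ becomes the kinematic identity $v\n f=(1-\a)|v|^{2}$, whose integration over $\S^{2}$ (using $\diver v=-(2-\a)f$) yields
\[
(2-\a)\int_{\S^{2}} f^{2}\,d\s \;=\; (1-\a)\int_{\S^{2}} |v|^{2}\,d\s.
\]
For $1<\a\leq 2$ the two sides have coefficients of opposite sign, forcing $f\equiv v\equiv 0$; combined with \eqref{e3} this gives the trivial solution for $\a\in(1,2)$ and the radial solution \eqref{e:rad} for $\a=2$, both irrotational. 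The case $\a=1$ is already covered by \prop{p:a=1}. The $\a=0$, $H\equiv\const$ statement admits identical reductions since $\a H\nor=0$ and $\n^{\perp}H=0$, while \eqref{e2} now reads $v\n f=|v|^{2}$.

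For $\a<1$, substituting $u=\l v$ and $\o=f\l$ into the tangential vorticity equation \eqref{e:lie1} and simplifying (the left-hand side becomes $-(v\n\l)v$ and the right $-f\l v$) yields the transport law $v\n\l=f\l$ on $U$. I then test against $f\l^{2k-1}$ for an integer $k\geq 1$ and integrate by parts on $\S^{2}$, using $\diver(fv)=-(2-\a)f^{2}+(1-\a)|v|^{2}$ together with the pointwise identifications $f^{2}\l^{2k}=\o^{2}\l^{2k-2}$ and $|v|^{2}\l^{2k}=|u|^{2}\l^{2k-2}$. After rearrangement this gives the key family of identities
\[
(2k+\a-2)\int_{\S^{2}}\o^{2}\l^{2k-2}\,d\s \;+\; (1-\a)\int_{\S^{2}}|u|^{2}\l^{2k-2}\,d\s \;=\; 0.
\]
For any $\a\leq 1$ I choose $k$ with $2k+\a-2\geq 0$; since $1-\a\geq 0$ as well, both coefficients are nonnegative and the two nonnegative integrands must vanish identically. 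Therefore $u\l^{k-1}\equiv 0$ and $\o\l^{k-1}\equiv 0$ on $\S^{2}$, hence $u\equiv 0$ (on $U$ since $u=\l v$, on $\S^{2}\setminus U$ automatically) and $\o\equiv 0$ (using $\o=\curl v$ and continuity across $\partial U$). So $\O=0$ in $\R^{3}\setminus\{0\}$, and the previous proposition supplies the claimed irrotational form. The base case $k=1$ alone suffices for $\a\in[0,1)$, which in particular handles the $\a=0$, $H\equiv\const$ statement since the identity immediately reduces to $\int|u|^{2}=0$.

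The main technical obstacle is the finiteness of the $\l^{2k-2}$-weighted integrals when $\a<0$ forces $k\geq 2$, since \emph{a priori} the ratio $\l=u\cdot v/|v|^{2}$ could blow up at zeros of $v$. I expect to resolve this by noting that $u\parallel v$ pointwise forces $u$ to vanish at every zero of $v$, and that at a nondegenerate zero $x_{0}$ of $v$ the $C^{2}$-regularity makes the linearizations of $u$ and $v$ proportional, so $\l$ extends continuously across $x_{0}$. Any degenerate zeros can be handled by a Poincar\'e--Bendixson argument on $\S^{2}$: for $\a<1$ the relation $\dot f=(1-\a)|v|^{2}$ along $v$-orbits makes $f$ strictly monotone on every non-fixed-point orbit, which rules out periodic orbits; the ODE $\dot\l=f\l$ along such an orbit then forces $\l\equiv 0$ whenever $\l$ is bounded, yielding $u\equiv 0$ directly. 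Either route establishes $\O=0$.
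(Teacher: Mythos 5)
Your treatment of $1<\a\leq 2$ (integrating $v\n f=(1-\a)|v|^2$ against the divergence relation) is exactly the paper's argument and is fine. The gap is in the case $\a<1$. Your entire family of identities is built on the scalar $\l=u\cdot v/|v|^2$, which is only defined on $U=\{v\neq 0\}$, and the proposed repair does not work: the relation $u\times v=\a H\nor=0$ is \emph{vacuous} at a zero of $v$, so it does not force $u$ to vanish there. The only pointwise information at such a point is $f u=\o v=0$, which kills $u$ only where $f\neq 0$; at a common zero of $v$ and $f$ one can still have $u=-\n^\perp f\neq 0$, and then $\l$ genuinely blows up. Consequently neither the finiteness of $\int \o^2\l^{2k-2}$, $\int|u|^2\l^{2k-2}$ for $k\geq 2$ (needed as soon as $\a\leq 0$) nor the integration by parts over all of $\S^2$ (your transport law $v\n\l=f\l$ holds only on $U\cap\{f\neq 0\}$, and the boundary terms on $\p U$ are not shown to vanish) is justified. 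The ``linearizations are proportional'' claim at nondegenerate zeros and the Poincar\'e--Bendixson sketch are assertions, not proofs, and the first rests on the same false premise.

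The fix is to avoid dividing by $|v|^2$ altogether, which is what the paper does: instead of weighting by $\l^{2k-2}$, weight by $\o^n$ for a large even $n$. Using \eqref{e1}, $v\n f=(1-\a)|v|^2$, \eqref{e:lie2}, and $fu=\o v$ one gets the \emph{globally valid} pointwise identity
\[
\diver(f\o^n v)=(n+\a-2)f^2\o^n+(1-\a)(n+1)|v|^2\o^n,
\]
whose ingredients are $C^1$ on all of $\S^2$, with no quotient and no boundary terms. Integrating over the sphere with $n$ even and $n\geq 2-\a$ makes the right-hand side a sum of nonnegative terms, hence zero pointwise, which yields $\o v=fu=0$ everywhere. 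Note this does \emph{not} immediately give $u\equiv 0$; one still needs the paper's closing step: on $\{f\neq 0\}$ one has $u=0$, i.e.\ $\n f=(1-\a)v$, hence $\D f=-(2-\a)(1-\a)f$ there, and by continuity this eigenvalue equation holds on $\overline{\{f\neq 0\}}$ and trivially on its complement; so either $f\equiv 0$ (then $v\equiv 0$ from the integrated identity) or $f$ is a spherical harmonic, which cannot vanish on an open set, forcing $u\equiv 0$ and $\O=0$. I recommend you replace your $\l$-weighted identities with this $\o^n$-weighted version; the underlying idea of a one-parameter family of sign-definite integral identities is the right one.
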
 
\begin{proof} The case $1 <\a \leq 2$ is actually straightforward. We have from \eqref{e2}, 
\begin{equation}\label{e:evaux}
v \n f = (1-\a) |v|^2.
\end{equation}
Let us integrate over $\S^2$ and integrate by parts on the left. Using \eqref{e1} we obtain
\begin{equation}\label{}
(2-\a) \int f^2 d \s = (1-\a) \int |v|^2 d\s. 
\end{equation}
This implies $f , v=0$ unless $\a =2$ in which case we obtain the radial irrotational solution $v=0$, $f = \const$.

Let us turn to the case $\a <1$ ($\a = 1$ having been excluded). From \eqref{e:uv2} we obtain $fu = \o v$ for any constant $H$. Also, \eqref{e:evaux}  holds for zero $H$ or constant $H$ with $\a=0$. Using \eqref{e:lie2}  in addition, we have for all $n\in \N$ the identity
\[
\diver(f \o^n v) = (n+\a-2) f^2 \o^n + (1-\a)(n+1) |v|^2 \o^n.
\]
When $\a<1$ we can choose a large even $n$ for which the right hand side is pointwise non-negative. Integrating over the sphere we see that it must vanish pointwise. This implies that if $\o \neq 0$, then $f=v=0$ at the same point. In either case, $\o v = fu =0$ throughout. Thus, on the set $\{f \neq 0\}$, we have $u =0$, i.e. $\n f = (1-\a) v$. Taking the divergence we obtain the Laplace equation \eqref{e:Ha}. By continuity, \eqref{e:Ha} holds on the closure of the set $\{f \neq 0\}$. But on the complement of the closure, \eqref{e:Ha} holds trivially as both sides vanish. So, unless $f$ vanishes identically, in which case we have $v=0$ from \eqref{e:evaux}, $f$ satisfies \eqref{e:Ha} throughout. Hence $f = Y_l^m$, and we know that harmonics do not vanish on a dense set. This in turn implies $u = 0$ everywhere, and hence, $\O = 0$.

\end{proof}

\section{Rotational solutions}
The opposite extreme to radial, and in a sense to irrotational flows altogether, are tangential flows, i.e. ones with the orbits of $V$ living on concentric spheres around the origin. This is only possible when $f=0$ throughout, and hence $\diver v = 0$ (so, as a form $v$ is co-exact as opposed to irrotational exact forms). One obvious example is given from the class of 2D flows as discussed above. Namely, in a fixed Cartesian system, we have
\begin{equation}\label{e:rotCar}
V = \frac{A}{r^{\a+1}} \left \lan-y,x, 0 \right\ran, \quad P = -\frac{A}{2\a r^{2\a}}, \quad r = \sqrt{x^2 + y^2}.
\end{equation}
Note that it gains $C^1$-smoothness only for values $\a \leq -1$. We now show that these are the only examples of $C^1$ tangential solutions.

\begin{proposition}\label{p:f=0} Suppose $f =0$ and $v,p \in C^1(\S^2)$. Then up to a rotation the solution is given by \eqref{e:rotCar}, and  $\a \leq -1$. There are no $C^1$-solutions with $f=0$ for $\a > -1$.
\end{proposition}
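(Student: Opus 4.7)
The plan is to reduce the tangential problem to a scalar stream function $\psi$ on $\S^2$, to establish a pregeodesic rigidity which forces $\psi$ to be axisymmetric, and then to solve a simple ODE. First, with $f=0$ the three equations of \eqref{e} collapse to $\diver v = 0$, the algebraic relation $p = -|v|^2/(2\a)$ (the case $\a=0$ forces $v=0$ immediately, and $\a=1$ is already excluded by \prop{p:a=1}), and the momentum equation $v\cdot\n v = \n|v|^2/(2\a)$. The planar identity $v\cdot\n v = \tfrac{1}{2}\n|v|^2 + \o v^\perp$ then rewrites this as
\[
\o v^\perp \;=\; \l\, \n|v|^2, \qquad \l := \frac{1-\a}{2\a},
\]
so dotting with $v$ yields $v\cdot\n|v|^2=0$: the kinetic energy $|v|^2$ is constant on every streamline. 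Since $H^1(\S^2)=0$, the divergence-free field $v$ admits a global stream function $v=\n^\perp \psi$ with $\psi\in C^2(\S^2)$; in particular $v^\perp = -\n\psi$ and $|v|^2 = |\n\psi|^2$.

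The crux is the general identity $\n_{\n\psi}\n\psi = \tfrac{1}{2}\n|\n\psi|^2$, which together with the relation above gives
\[
\n_{\n\psi}\n\psi \;=\; -\frac{\o}{2\l}\,\n\psi.
\]
Thus every integral curve of $\n\psi$ is a pregeodesic on $\S^2$, i.e., an arc of a great circle. The critical structure of $\psi$ can then be pinned down: a basin-of-attraction argument, using that the complement of the (isolated) critical set of $\psi$ on $\S^2$ is connected, shows that $\psi$ has a unique global maximum $x_+$ and a unique global minimum $x_-$, and every regular gradient orbit is a finite-length geodesic arc joining $x_-$ to $x_+$. But two non-antipodal points of $\S^2$ are joined by a \emph{unique} great circle, which cannot carry a 2-parameter family of orbits. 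Hence $x_+$ and $x_-$ must be antipodal, the orbits of $\n\psi$ form the full meridian foliation, and $\psi$ has no further critical points; rotating coordinates so the $z$-axis passes through $x_\pm$, the level sets of $\psi$ are the parallels and $\psi = h(\phi)$ depends only on the polar angle.

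For axisymmetric $\psi$ one has $|\n\psi|^2 = (h')^2$, $\Delta\psi = h'' + \cot\phi\, h'$, and $\n|v|^2 = 2h'h''\,\ef$. Plugging into $\n|v|^2 = -(\o/\l)\n\psi$ with $\o = \Delta\psi$ and using $1+2\l = 1/\a$ reduces everything to the ODE $h'' = -\a\cot\phi\, h'$, which integrates to $h'(\phi) = A\sin^{-\a}\phi$. The resulting field $v = A\sin^{-\a}\phi\,\eth$, lifted by $V = v/|x|^\a$, is precisely \eqref{e:rotCar} (up to the axis rotation). In Cartesian coordinates near the axis, $V^x = -Ay/(x^2+y^2)^{(\a+1)/2}$ is $C^1$ at the axis iff $\a\leq -1$, so for $\a>-1$ the only $C^1$ option is $A=0$, the trivial flow. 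The hardest part will be the middle step: extracting full antipodality and the absence of extra (possibly degenerate) critical points of $\psi$ purely from the pregeodesic identity together with the global topology of $\S^2$.
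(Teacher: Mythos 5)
Your reduction is correct, and your key identity is essentially the one the paper's proof runs on: with $f=0$ one gets $\o v^\perp = \tfrac{1-\a}{2\a}\n |v|^2$, so $|v|^2$ is transported along $v$ and the integral curves of $v^\perp$ (equivalently of $u=(1-\a)v^\perp$, equivalently the gradient lines of your $\psi$) are pregeodesics; your closing ODE and the threshold $\a\le -1$ are also right. The genuine gap is exactly the step you flag. Nothing you have established entitles you to assume the critical set of $\psi$ consists of isolated points, let alone that $\psi$ has a unique maximum and minimum with every regular orbit running between them. The zero set of $v$ could a priori contain closed curves or open bands --- vanishing on large sets is a real phenomenon for this system, cf.\ the conical solutions \eqref{e:Vaxi} --- in which case the complement of the critical set is disconnected and the basin-of-attraction argument has nothing to act on; even for isolated zeros, Poincar\'e--Hopf only fixes the total index at $2$ and does not exclude several local extrema and saddles. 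Moreover, the pregeodesic property of gradient lines is not by itself locally rigid: an open band of $\S^2$ can be foliated by arcs of the great circles tangent to a fixed parallel, and the orthogonal foliation is locally a gradient foliation, yet the resulting potential is not a function of $\phi$ alone. So the passage from ``gradient orbits are great-circle arcs'' to ``meridian foliation with antipodal foci'' must use more than the pregeodesic property plus topology, and that argument is not supplied.

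The paper closes this gap by anchoring the argument at a different object. Since $|v|^2$ and $p$ are transported, the $v$-orbit through a point of maximal $|v|$ has $\n|v|^2=\n p=0$ along it, so by \eqref{e3} it is a complete geodesic, i.e.\ a great circle $E$ on which $v$ does not vanish (unless $v\equiv 0$). Along $E$ the field $u$ is orthogonal to $E$, and since the $u$-trajectories are (reparametrized) geodesics, in a neighborhood of $E$ they run along the meridians orthogonal to $E$; in coordinates adapted to $E$ this forces $a=0$ and $\th$-independence, the system collapses to \eqref{e:b=0} with explicit solution \eqref{e:rot}, and because $|v|=|A|\sin^{-\a}\phi$ is bounded away from zero on every closed band avoiding the poles, the region where the formula holds continues to all of $0<\phi<\pi$. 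That continuation never requires classifying the critical set of $\psi$. If you want to keep your stream-function framing, the missing lemma you need is a substitute for precisely this step.
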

\begin{proof} 
In the case $\a=0$ the statement is trivial from \eqref{e2}. We assume that $\a \neq 0$. According to \eqref{e:H}, $H$ remains constant along the orbits of $v$. Furthermore, from \eqref{e2}, 
\begin{equation}\label{e:Hvp}
H = \frac{\a - 1}{\a} |v|^2  = 2(1-\a) p.
\end{equation}
Hence, $|v|^2$ and $p$ are transported as well. Let $x_0$ be a point where $|v|$ attains its maximum, and let $x(t)$ be the $v$-orbit through $x_0$. Since $|v|$ is transported, it will preserve its extreme status, and hence $\n |v|^2 = \n p =0$ on the orbit. Returning to \eqref{e3} we see that $x(t)$ is a complete geodesic. 
Let us denote it $E$. 

From the momentum equation \eqref{e3} and \eqref{e:Hvp} we obtain
\[
v\n v = - \frac{1}{2(1-\a)} \n H.
\]
Taking the $\perp$ and using \eqref{e:uv2} we obtain
\[
v \n (v^\perp) = - \frac{1}{(1-\a)} \o v.
\]
Consequently, $v\n u = - \o v$. Plugging this into \eqref{e:lie1} we obtain $u \n v = \a \o v$.  And finally, taking $\perp$ again,
$u \n u = \a \o u$. Reparametrizing the field $u$ along its own trajectories by $\exp\{ -\a \int_0^t \o(s) ds \} u$ we see that the trajectories are geodesics provided initial $u$ is not zero. On the equator $E$ all vectors of $u$ will point either due north or due south.  This in turn implies that at least in a neighborhood $\Sigma$ of the equator $E$ where $u \neq 0$ the field $u$ points along the meridians. Let us fix spherical coordinates so that
$E = \{ \phi = \pi/2\}$. Then the field $v$ has zero $\p_\phi$-component, and the orbits of $v$ are latitudes. Moreover, since $|v|$ is preserved along $v$-orbits, $v$ is independent of $\th$. According to our conclusions, we have  $f = a = 0$ and $b, p \in C^1$  depend only on $\phi$. In this case, the system \eqref{e:sph} reduces to
\begin{equation}\label{e:b=0}
\begin{split}
 b^2 + 2\a p & = 0\\
b^2 \cot \phi& 
 =  p_\phi
\end{split}
\end{equation}
For $\a = 0$ there are only trivial zero solutions. Otherwise, the solutions are given by
\begin{equation}\label{e:rot}
b = \frac{A}{\sin^{\a} \phi}, \quad p = - \frac{A}{2\a \sin^{2\a} \phi}, \quad A \in \R.
\end{equation}
In Cartesian coordinates this is nothing other than \eqref{e:rotCar}. It also shows that $\Sigma$ covers the entire sphere except poles, and the proposition is proved.
\end{proof}

Another characteristic feature of rotational flows is that $|v|^2 + 2\a p = 0$. It can be shown to be their exclusive property.

\begin{corollary} If $|v|^2 + 2\a p = 0$, then the flow is rotational.
\end{corollary}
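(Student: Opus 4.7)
The plan is to combine the hypothesis $|v|^2 + 2\a p = 0$ with equation \eqref{e2} and then extract a pointwise conclusion by an integration-by-parts argument on the sphere. Substituting $2\a p = -|v|^2$ directly into \eqref{e2}, the right-hand side collapses and one is left with the clean identity
\begin{equation*}
v \n f = \a f^2.
\end{equation*}
This reduces the problem of showing $f \equiv 0$ to checking that the only solution to this transport-type equation compatible with the continuity equation \eqref{e1} is the zero one.

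Next I would integrate this identity over $\S^2$. Using $\int_{\S^2} v \n f \, d\s = -\int_{\S^2} f \diver v \, d\s$ and applying \eqref{e1} to rewrite $\diver v = -(2-\a) f$, the left-hand side becomes $(2-\a)\int f^2 \, d\s$, whereas the right-hand side is $\a \int f^2 \, d\s$. Subtracting yields
\begin{equation*}
2(1-\a) \int_{\S^2} f^2 \, d\s = 0,
\end{equation*}
so $f \equiv 0$ whenever $\a \neq 1$. The flow is then tangential and \prop{p:f=0} applies.

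The degenerate scalings need to be handled separately. For $\a = 1$, the whole scaling is ruled out by \prop{p:a=1}, so there is nothing to prove. For $\a = 0$, the hypothesis reads $|v|^2 = 0$, forcing $v = 0$; then \eqref{e1} gives $2f = 0$ and again $f \equiv 0$. There is no serious obstacle to the argument; the only thing to be careful about is organizing the dichotomy $\a \in \{0,1\}$ versus the generic case so that the division by $\a$ implicit in solving for $p$ is legitimate.
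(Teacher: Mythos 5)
Your argument is correct, but it is not the one the paper uses. Both proofs begin identically: substituting the hypothesis $|v|^2+2\a p=0$ into \eqref{e2} gives $v\n f=\a f^2$. From there the paper reads this as a Riccati equation $f'=\a f^2$ along the trajectories of $v$ and concludes that, unless $\a=0$, any nonzero value of $f$ would force finite-time blow-up of $f$ along a complete orbit on the compact sphere, contradicting boundedness; the exceptional case is then $\a=0$, handled exactly as you do. You instead integrate $v\n f=\a f^2$ over $\S^2$, use $\int v\n f\,d\s=-\int f\diver v\,d\s$ and \eqref{e1} to get $2(1-\a)\int f^2\,d\s=0$, so your exceptional case is $\a=1$, which is vacuous by \prop{p:a=1}. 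Your global integration-by-parts route is arguably more elementary (no discussion of completeness of orbits or of stagnation points, where the Riccati reasoning needs the extra observation that $v(x_0)=0$ forces $\a f(x_0)^2=0$), and it even covers $\a=0$ without a separate case since the hypothesis there already forces $v=0$; the paper's pointwise ODE argument, on the other hand, is local and does not rely on the exclusion of $\a=1$. One minor remark: your closing worry about ``the division by $\a$ implicit in solving for $p$'' is moot --- the hypothesis is substituted additively into \eqref{e2}, so no division ever occurs. Both arguments finish the same way, by invoking \prop{p:f=0} once $f\equiv 0$.
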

\begin{proof}
Indeed, from \eqref{e2} we have the Riccati equation $f' = \a f^2$. So, unless $\a = 0$, $f = 0$ identically, which implies the conclusion via \prop{p:f=0}. If $\a = 0$, then $v =0 $ by assumption, and hence $f=0$ by the divergence equation \eqref{e1}. 
\end{proof}

Let us point out other corollaries of \prop{p:f=0}.

\begin{corollary}\label{c:a=2} If $|v|^2 + 2p = \const$, then $\a = 2$, $f = \const$, $v=0$.
\end{corollary}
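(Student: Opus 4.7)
The plan is to combine the hypothesis $|v|^2+2p=c$ (a constant) with the Bernoulli transport law \eqref{e:H} to extract a pointwise algebraic constraint, and then resolve the resulting dichotomy using \prop{p:a=1} and \prop{p:f=0}.

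The hypothesis rewrites as $H=f^2+c$. Differentiating along $v$ and applying \eqref{e:H} gives $2\a f(f^2+c)=v\cdot\n H=2f\,v\cdot\n f$. Substituting $v\cdot\n f$ from \eqref{e2} and replacing $2p$ by $c-|v|^2$, the $\a f^3$ and $fc$ terms on the two sides cancel, leaving the pointwise identity $(\a-1)|v|^2 f\equiv 0$ on $\S^2$. Since $\a=1$ is excluded by \prop{p:a=1}, this reduces to $|v|^2 f\equiv 0$ pointwise.

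Now comes the main case split. If $f$ does not vanish identically, the open set $U=\{f\neq 0\}$ carries $v\equiv 0$, hence $\diver v\equiv 0$ on $U$; the continuity equation \eqref{e1} forces $(2-\a)f=0$ on $U$, so $\a=2$. Feeding $v=0$ and $\a=2$ into \eqref{e2} yields $p=-f^2/2$ on $U$, and combining with the hypothesis gives $f^2\equiv -c$ on $U$. Hence $f$ is locally constant on $U$; but on $\p U$ continuity forces $f=0$, contradicting $f^2=-c\neq 0$ unless $\p U=\emptyset$. Connectedness of $\S^2$ then gives $U=\S^2$, producing $v\equiv 0$, $\a=2$, and $f$ a (nonzero) constant, which is precisely the radial solution \eqref{e:rad}.

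It remains to dispense with the alternative $f\equiv 0$. Here \prop{p:f=0} identifies the solution up to rotation as \eqref{e:rotCar}, and a direct computation from \eqref{e:rot} shows that $|v|^2+2p$ is a nonzero multiple of $\sin^{-2\a}\phi$ whenever the amplitude is nonzero (and $\a\neq 0,1$), hence nonconstant on $\S^2$. Thus the amplitude must vanish, and we land on the trivial solution already compatible with the stated conclusion. The only real subtlety in the argument is the topological jump from the pointwise identity $|v|^2 f\equiv 0$ to the global dichotomy between the regimes $f\equiv 0$ and $v\equiv 0$, and this is exactly where the $C^1$ regularity and the connected component argument on $U$ do the work.
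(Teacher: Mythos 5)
Your proof is correct, but it reaches the key pointwise identity and resolves the endgame by a genuinely different route than the paper. The paper takes the full vector form of the momentum equation, \eqref{e:H2}: under the hypothesis the right-hand side vanishes, so $(1-\a)fv+\o v^\perp=0$, and since $v\perp v^\perp$ this gives \emph{both} $f=0$ and $\o=0$ wherever $v\neq 0$. It then splits on $\a\neq 2$ versus $\a=2$: in the first case continuity and \eqref{e1} force $f\equiv 0$ and \prop{p:f=0} yields the nonconstant rotational profile (contradiction unless trivial); in the second case $\o\equiv 0$ and $\diver v=0$ make $v$ a harmonic $1$-form on $\S^2$, hence $v=0$. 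You instead extract only the component of that identity along $v$ — your computation via $H=f^2+c$, \eqref{e:H} and \eqref{e2} is exactly the pairing of \eqref{e:H2} with $v$ — obtaining $(1-\a)f|v|^2\equiv 0$, and you organize the case split around $f\equiv 0$ versus $U=\{f\neq 0\}\neq\emptyset$. On $U$ you get $v\equiv 0$, hence $\a=2$ from \eqref{e1}, and the relation $f^2=-c$ makes $U$ closed as well as open, so $U=\S^2$ by connectedness. What your version buys is the avoidance of any Hodge-theoretic input (the paper's ``harmonic, hence zero'' step uses triviality of $H^1(\S^2)$), replaced by an elementary clopen argument; what it gives up is the extra information $\o=0$, which the paper gets for free and which makes the $\a=2$ case immediate. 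Both proofs invoke \prop{p:a=1} to discard $\a=1$ and \prop{p:f=0} plus the explicit formula \eqref{e:rotCar} to kill the tangential case, and both, strictly speaking, leave the identically zero solution as a degenerate case of the stated conclusion — a shared, harmless imprecision. Your argument is a valid alternative proof.
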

\begin{proof} From \eqref{e:H2} we immediately obtain $(1-\a) f v + g v^\perp = 0$. We can assume that $\a \neq 1$, in which case the above shows that 
\[
v(x) \neq 0 \implies f(x) =g(x) = 0.
\]
Unless $\a = 2$, by continuity and \eqref{e1} this implies that $f = 0$ throughout. By Proposition~\ref{p:f=0} this describes the solution as rotational, which is a contradiction, since for such solutions $2p + |v|^2 \neq \const$ unless $v = p=0$. If $\a = 2$, then by continuity $g = 0$ throughout, and in addition $v$ is divergence-free. So, $v$ is harmonic as a form, hence $v=0$. Then $p$ is a constant, and from \eqref{e2} we conclude that $f = \const$, which identifies the solution as described.
\end{proof}

\begin{corollary} Suppose $p \geq 0$ and $\a >0$. Then the solution in trivial, $v = f = p = 0$.
\end{corollary}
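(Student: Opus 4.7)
The plan is to exploit the sign structure of equation \eqref{e2} under the hypotheses $\a > 0$ and $p \geq 0$. Since $|v|^2$, $\a f^2$, and $2\a p$ are all pointwise nonnegative, the right-hand side of \eqref{e2} is a sum of three nonnegative quantities. Consequently, at any point where $v \n f$ is forced to vanish, each of these three summands must vanish separately.

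The first step is to apply this observation at critical points of $f$. Because $f \in C^1(\S^2)$ and $\S^2$ is compact, $f$ attains its maximum at some $x_* \in \S^2$ and its minimum at some $x_\dagger \in \S^2$. At both critical points $\n f = 0$, so $v \n f = 0$, and \eqref{e2} collapses to
\[
|v|^2 + \a f^2 + 2\a p = 0.
\]
The nonnegativity of each summand, together with $\a > 0$, then forces $f(x_*) = f(x_\dagger) = 0$. Hence $\max f = \min f = 0$, and so $f \equiv 0$ on $\S^2$.

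The second step is to feed $f \equiv 0$ back into \eqref{e2}. The equation becomes $0 = |v|^2 + 2\a p$ pointwise, and the sign constraints immediately yield $v \equiv 0$ and $p \equiv 0$, completing the argument.

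I do not anticipate a real obstacle here: the proof reduces to combining the extreme value theorem on $\S^2$ with the definite sign of the source term in \eqref{e2}. As an alternative route one could stop after deriving $f \equiv 0$ and invoke \prop{p:f=0}, noting that $\a > 0$ falls in the range $\a > -1$ where no nontrivial tangential $C^1$-solution exists; but the direct argument via \eqref{e2} is more economical and requires no appeal to the rotational classification.
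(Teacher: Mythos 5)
Your proof is correct, and it takes a genuinely different route from the paper's. The paper reads \eqref{e2} as a differential inequality along the orbits of $v$: writing $f_t = |v|^2 + \a f^2 + 2\a p \geq \a f^2$, it invokes the Riccati blow-up alternative to conclude that a bounded $f$ must vanish identically, and then appeals to \prop{p:f=0} (the classification of tangential solutions) to kill $v$ and $p$. You instead evaluate \eqref{e2} statically at the extrema of $f$: since $\S^2$ is compact and boundaryless, $\n f$ vanishes at the max and min, so the nonnegative right-hand side $|v|^2 + \a f^2 + 2\a p$ must vanish there, forcing $\max f = \min f = 0$; feeding $f\equiv 0$ back into \eqref{e2} then gives $|v|^2 + 2\a p \equiv 0$ pointwise and hence $v\equiv 0$, $p\equiv 0$. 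Your argument is more elementary and self-contained: it needs only the extreme value theorem and the sign structure of \eqref{e2}, and in particular it bypasses both the ODE blow-up reasoning and the (comparatively heavy) rotational rigidity result \prop{p:f=0}. What the paper's route buys in exchange is a technique --- the Riccati/transport analysis of $f$ and $H$ along trajectories --- that it reuses elsewhere (e.g.\ in \lem{l:H<0}), so the dynamical phrasing is not wasted even though it is overkill for this particular corollary.
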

\begin{proof} From  \eqref{e2} we have the Riccati inequality $f' \geq \a f^2$. Unless initial condition is $0$ the solution will blow up either forward of backward along the orbit. This immediately implies $f = 0$ throughout. \prop{p:f=0} finishes the proof.
\end{proof}

In the range $0<\a<1$ we can establish a much stronger statement exploiting the dynamical nature of the system \eqref{e2}, \eqref{e:H}. Let us rewrite it as a system over the trajectories of $v$:
\begin{align}
f_t& = \a H + (1- \a)|v|^2 \label{f}\\ 
H_t& = 2\a f H. \label{H}
\end{align}

\begin{lemma} \label{l:H<0}
In the range $0<\a<1$, we have $H\leq 0$, and hence $p\leq 0$, throughout. 
\end{lemma}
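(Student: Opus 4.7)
The plan is to argue by contradiction via the extremal principle applied to $H$, exploiting both inequalities $\a > 0$ and $\a < 1$ in an essential way. I would assume toward contradiction that $H$ takes a strictly positive value somewhere on $\S^2$. Set $M = \max_{\S^2} H > 0$ and fix a maximizer $x_0 \in \S^2$. Since $H = |v|^2 + f^2 + 2p$ belongs to $C^1(\S^2)$, the first-order condition $\n H(x_0) = 0$ holds, hence in particular $v(x_0)\cdot \n H(x_0) = 0$. Reading \eqref{H} at $x_0$ gives $0 = 2\a f(x_0) M$, and since $\a > 0$ and $M > 0$, I extract $f(x_0) = 0$.

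Next I would evaluate \eqref{f} at $x_0$ to obtain
\[
v(x_0) \cdot \n f(x_0) = \a M + (1 - \a) |v(x_0)|^2.
\]
The hypothesis $0 < \a < 1$ makes both terms on the right non-negative and forces the right side to be at least $\a M > 0$. This has two consequences: first, $v(x_0) \neq 0$ (otherwise the left side vanishes), so the $v$-integral curve $x(t)$ through $x_0$ is genuinely non-trivial; second, along this curve $f(x(t))$ starts at $0$ with strictly positive initial derivative, so $f(x(t)) > 0$ for small $t > 0$. By continuity of $H$ and $H(x_0) = M > 0$, equation \eqref{H} then gives $\tfrac{d}{dt} H(x(t)) = 2\a f(x(t)) H(x(t)) > 0$ on a small interval $(0,\e)$, whence $H(x(\e/2)) > M$. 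This contradicts the maximality of $M$, forcing $H \leq 0$ throughout. The bound $p \leq 0$ then follows immediately from $2p = H - |v|^2 - f^2 \leq 0$.

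There is no substantial obstacle; the content of the argument is the coupled use of the two signs. The sign $\a > 0$ is exactly what converts the extremal identity $v\n H(x_0) = 0$ into the pointwise information $f(x_0) = 0$, and the sign $1 - \a > 0$ is exactly what prevents the $(1-\a)|v|^2$ term in \eqref{f} from cancelling $\a H$ and stalling the argument, so that $f$ is guaranteed to become strictly positive as we move along the orbit. Without either hypothesis the extremal step breaks down, which is why the statement is confined to the open range $0 < \a < 1$.
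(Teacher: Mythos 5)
Your proof is correct, and it reaches the conclusion by a genuinely different route from the paper's. The paper works globally along a single $v$-trajectory: from $H(t)=H_0\exp\{2\a\int_0^t f(s)\,ds\}$ and the boundedness of $H$ on the compact sphere it deduces $\limsup_{t\to+\infty}f\le 0$ and $\liminf_{t\to-\infty}f\ge 0$, hence $f_t(t^*)\le 0$ at some time $t^*$, which by \eqref{f} together with $\a>0$ and $1-\a>0$ forces $H(t^*)\le 0$; since the exponential formula shows the sign of $H$ is constant along each orbit, $H(x_0)\le 0$ follows. You instead run a local maximum-principle argument at a global maximizer of $H$: criticality of $H$ there combined with $\a>0$ yields $f(x_0)=0$ via \eqref{e:H}, and then \eqref{f} with $1-\a>0$ gives $v(x_0)\cdot\n f(x_0)\ge\a M>0$, so $f$ becomes strictly positive just ahead along the orbit and $H$ strictly increases past its maximum, a contradiction. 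Both arguments rest on exactly the same two equations and use the two sign hypotheses in the same roles, but yours is more elementary and local: it needs the flow only for short time near one point, and neither completeness of trajectories nor the integrated exponential formula. What the paper's version buys in exchange is the byproduct that the sign of $H$ is a trajectory invariant, a fact it reuses later (e.g.\ in the axisymmetric analysis). Every step of yours checks out, including the final strict inequality $H(x(\e/2))>M$, which follows by integrating the derivative $2\a f(x(s))H(x(s))$, nonnegative on $[0,\e)$ and strictly positive for $0<s<\e$.
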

\begin{proof}
Let us fix $x_0 \in \S^2$, and assume that $H_0 = H(x_0) \neq 0$. From \eqref{H} we readily obtain
\[
\begin{split}
H(t) &= H_0 \exp \{ 2\a \int_0^t f(s) ds \} \\
H(-t) &= H_0\exp \{ -2\a \int_0^t f(-s) ds \} 
\end{split}
\]
Suppose $0<\a<1$. Since $H$ is bounded, this implies that 
\[
\begin{split}
\int_0^t f(s) ds  < M;\quad \int_0^t f(-s) ds & > - M
\end{split}
\]
for some $M$ and all $t>0$. So,
\[
\begin{split}
\limsup_{t \ra \infty} f(t) \leq 0; \quad \liminf_{t \ra -\infty} f(t) & \geq 0.
\end{split}
\]
This implies that at some point of time $t^*$, $f_t(t^*) \leq 0$. Hence, from \eqref{f}, 
$H(t^*) \leq 0$. Since the sign of $H$ remains constant along the trajectory, we obtain $H(x_0)\leq 0$.
\end{proof}

We note that sign-definiteness of the Bernoulli function $H$ has been instrumental in establishing Liouville theorems for the axisymmetric solutions to the Navier-Stokes and Euler equations in \cite{kor} , and ruling out higher than $4$-dimensional homogeneous Landau-type solutions for the Navier-Stokes system, \cite{sverak}. In our case the geometric implication of \lem{l:H<0} and \eqref{e:uv1} states that the form $v \wedge u$ is co-oriented with the canonical volume form at any given point on $\S^2$.

\section{Axisymmetric solutions}\label{s:axi}
In this section we study axisymmetric solutions with or without swirl. We assume that $\a \neq 1$ as this case has been ruled out by \prop{p:a=1} as having no smooth solutions. In order for a solution to remain smooth at the pole we necessarily have $a(0) = a( \pi) = b(0) = b(\pi) = 0$.  The system \eqref{e:sph} in our case reduces to
\begin{subequations} \label{e:axi}
\begin{align}
(2-\a) f + a' + a \cot \phi &= 0  \label{axi1}\\
a f' &= a^2 + b^2 +\a f^2 + 2\a p \label{axi2}\\
(1-\a) f a + a a' - b^2 \cot \phi & = - p'  \label{axi3}\\
(1-\a) f b + a b' + a b \cot \phi  & = 0. \label{axi4}
\end{align}
\end{subequations}

System \eqref{e:axi} has two conserved quantities. First, when $\a \neq 2$ we can express $f$ in terms of $a$ from the \eqref{axi1}, plug into \eqref{axi4}, divide by $ab$, provided $ab \neq 0$, we obtain the law
\begin{equation}\label{e:abA}
|b|^{2-\a} |a|^{\a-1} \sin \phi = A.
\end{equation}	
Second can be obtained from \eqref{e:H}. That equation in the axisymmetric case takes form
\begin{equation}\label{aHf}
a H' = 2\a f H.
\end{equation}
Let us suppose that $aH \neq 0$ on some interval $\phi \in I$. Then the above implies $\frac{d}{d\phi} \ln |H| = 2 \a \frac{f}{a}$. From \eqref{axi1} we also obtain $ \frac{d}{d\phi} \ln |a \sin \phi| = (\a-2) \frac{f}{a}$. We thus recover a closed differential which implies
\begin{equation}\label{aH}
|H|^{2-\a}|a \sin \phi|^{2\a} = B.
\end{equation}
We now obtain several results with the use of the found conservation laws.
\begin{proposition}\label{p:noaxi} There are no $C^2$ axisymmetric solutions in the range $0<\a<2$.
\end{proposition}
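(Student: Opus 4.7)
The natural strategy is to leverage the conservation law \eqref{aH} together with $C^2$-smoothness at the poles to force $H\equiv 0$ on $\S^2$; then \prop{p:irrot} combined with the constraint $\a\in\Z\bs\{1\}$ required for irrotational solutions yields a contradiction, since $(0,2)$ contains no integer other than $\a=1$, already excluded by \prop{p:a=1}. The starting observation is that, for a $C^2$ axisymmetric tangent field on $\S^2$, the components $a(\phi)$ and $b(\phi)$ vanish at least linearly at each pole, so $a(\phi)\sin\phi$ vanishes at least quadratically at $\phi=0,\pi$, while $H$ is globally bounded.

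The core step is the following dichotomy. On any maximal open subinterval $I\ss(0,\pi)$ where both $a$ and $H$ are nowhere zero, the law $|H|^{2-\a}|a\sin\phi|^{2\a}=B$ is valid. At each endpoint $\psi$ of $I$, either $\psi$ is a pole (so $\sin\psi=a(\psi)=0$) or $\psi\in(0,\pi)$ and by maximality at least one of $a(\psi),H(\psi)$ vanishes. Since $\a\in(0,2)$, both exponents $2-\a$ and $2\a$ are strictly positive, so boundedness of $H$ and $a\sin\phi$ yields $|H|^{2-\a}|a\sin\phi|^{2\a}\to 0$ as $\phi\to\psi$; hence $B=0$, contradicting the strict positivity of both factors on $I$. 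Thus no such $I$ exists, and at every $\phi\in(0,\pi)$ at least one of $a(\phi),H(\phi)$ vanishes.

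The remaining task is to exclude any maximal interval $J\ss(0,\pi)$ on which $a\equiv 0$ yet $H\not\equiv 0$. On such a $J$, \eqref{axi1} gives $f\equiv 0$ (as $\a\neq 2$), \eqref{axi4} is automatic, and \eqref{axi2}--\eqref{axi3} integrate to $b=C\sin^{-\a}\phi$, $H=-b^2/\a$. If $J$ reaches a pole, $C^1$-smoothness of $b$ forces $C=0$ because $\a>0$; if $J$ has an interior endpoint $\phi_*$, maximality produces points in every one-sided neighbourhood of $\phi_*$ outside $J$ where $a\neq 0$, whence $H=0$ there by the dichotomy and $H(\phi_*)=0$ by continuity, so the formula on $J$ gives $-C^2/(\a\sin^{2\a}\phi_*)=0$, again forcing $C=0$. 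In every case $b\equiv 0$ and $H\equiv 0$ on $J$, so $H\equiv 0$ on all of $\S^2$, and \prop{p:irrot} finishes the argument.

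I expect the main obstacle to be precisely this last bookkeeping: ensuring that the zero sets of $a$ and $H$ tile $(0,\pi)$ consistently, and that no pathological subinterval escapes both the conservation law \eqref{aH} and the degenerate analysis on $\{a\equiv 0\}$ components. The dichotomy together with continuity matching at the boundary of each nullity set dispose of every case, but the argument requires some care.
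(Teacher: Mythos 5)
Your argument is correct and follows essentially the same route as the paper: the conservation law \eqref{aH}, with both exponents $2-\a$ and $2\a$ positive precisely for $0<\a<2$, forces $aH\equiv 0$; the explicit rotational solution $b=C\sin^{-\a}\phi$ on the components of $\{a=0\}$ then forces $H\equiv 0$ (the paper phrases this via maximal intervals of $\{H\neq 0\}$ and blowup at the poles, but the content is identical); and \prop{p:irrot} excludes non-integer $\a$. Only a harmless slip: on such a component $2p=-b^2/\a$, so $H=b^2(\a-1)/\a$ rather than $-b^2/\a$, which is still a nonzero multiple of $b^2$ since $\a\neq 1$, so your conclusion stands.
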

\begin{proof} If $aH \neq 0$ on some interval $I$, then we immediately obtain from \eqref{aH} that $I = (0,\pi)$, and since $\sin \phi$ vanishes, $H$ becomes unbounded, which is a contradiction. Then $aH=0$ everywhere. Suppose $H \neq 0$ on some interval $I$. Then $a=0$, and from \eqref{aHf}, $f=0$. The entire system reduces to \eqref{e:b=0} with explicit solutions \eqref{e:rot}. These imply that $I = (0,\pi)$ since $H$ stays bounded away from zero. Hence $H$ blows up, which is a contradiction. We have proved that $H = 0$ on the entire sphere. By \prop{p:irrot} such solutions are irrotational and $\a$ is an integer, which excludes solutions in the given range.
\end{proof}

\begin{proposition}\label{l:axirad} The only axisymmetric solution available for $\a=2$ is the radial one given by \eqref{e:rad}. The only solutions available in the case $\a=0$ are the irrotational ones \eqref{e:irr}. 
\end{proposition}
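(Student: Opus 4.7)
The plan is to treat the two values of $\a$ separately, exploiting the strong degeneracies each introduces in the axisymmetric system \eqref{e:axi}.

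For $\a = 2$, equation \eqref{axi1} collapses to $a' + a\cot\phi = 0$, equivalently $(a\sin\phi)' = 0$. Smoothness at the poles forces the integration constant to vanish, so $a \equiv 0$. Substituting into the remaining equations, \eqref{axi2} yields the algebraic formula $4p = -(b^2 + 2f^2)$, \eqref{axi3} reduces to $p' = b^2 \cot\phi$, and \eqref{axi4} collapses to the pointwise constraint $fb = 0$. If $b \equiv 0$ then $p' = 0$, so $p$ is constant, and hence $f$ is constant by the algebraic identity -- this is exactly \eqref{e:rad}. Otherwise, on any maximal interval $(c,d)$ where $b \neq 0$, the product constraint forces $f \equiv 0$ there, and combining $p = -b^2/4$ with $p' = b^2 \cot\phi$ gives $b' = -2b\cot\phi$, which integrates to $b \sin^2\phi = \const$ on $(c,d)$. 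Whether $(c,d)$ touches a pole or ends at an interior zero of $b$, continuity forces this constant to vanish, contradicting $b \not\equiv 0$ on $(c,d)$.

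For $\a = 0$, the strategy is to prove that the Bernoulli function $H$ is identically constant on $\S^2$ and then invoke \prop{p:irrot}. The transport identity \eqref{aHf} degenerates to $aH' = 0$, so $H' = 0$ on the open set $\{a \neq 0\}$. On the interior of $\{a = 0\}$, reading off the system successively yields $f = 0$ from \eqref{axi1}, then $b = 0$ from \eqref{axi2}, and finally $p' = 0$ from \eqref{axi3}; hence $H = 2p$ is locally constant there as well. The union of these two open sets misses only $\partial \{a = 0\}$, which is nowhere dense in $(0,\pi)$. Since $H$ is continuously differentiable, vanishing of $H'$ on a dense subset forces $H' \equiv 0$ by continuity of $H'$, so $H$ is globally constant on $\S^2$. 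Then \prop{p:irrot} in the $\a = 0$ regime with $H = \const$ concludes that the solution is irrotational, hence of the form \eqref{e:irr}.

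The principal obstacle I anticipate is the careful boundary analysis for the swirling branch in the $\a = 2$ case: reconciling $b \sin^2 \phi = \const$ with the vanishing of $b$ at either a pole or an interior zero requires using continuity (and the smoothness hypothesis at the poles) to force the constant to be zero. The $\a = 0$ case then reduces cleanly, the main conceptual point being that the set on which $H$ is locally constant is dense in $(0,\pi)$, so that continuity of $H'$ promotes local constancy to global constancy, after which \prop{p:irrot} handles the identification of the solution.
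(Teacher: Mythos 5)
Your proof is correct and follows the same overall skeleton as the paper's (for $\a=2$: kill $a$ via \eqref{axi1} and smoothness at the poles, then exploit $fb=0$; for $\a=0$: show $H$ is constant and invoke \prop{p:irrot}), but you close out the subcases differently. In the $\a=2$ swirl branch the paper splits on whether $f$ vanishes somewhere: if $f(\phi_0)\neq 0$ it propagates $f=\const$ by an open--closed argument, and the remaining case $f\equiv 0$ is disposed of by citing the tangential-flow rigidity result \prop{p:f=0}. You instead split on $b$, and on a maximal interval where $b\neq 0$ you integrate the resulting ODE to $b\sin^2\phi=\const$ and force the constant to vanish at either an interior zero or a pole; this recovers exactly the singular family \eqref{e:rot} with $\a=2$ but excludes it by an elementary boundary argument rather than by appealing to \prop{p:f=0}, which makes your treatment more self-contained. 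In the $\a=0$ case the paper deduces constancy of $H$ in one line from the conservation law \eqref{aH} (which, strictly, is only derived on intervals where $aH\neq 0$); your argument via $aH'=0$ on $\{a\neq 0\}$, the direct computation $f=b=p'=0$ on the interior of $\{a=0\}$, and continuity of $H'$ across the nowhere dense boundary is more careful about the degenerate set and actually fills in a detail the paper glosses over. Both routes are valid; yours trades the use of the general rigidity propositions for explicit integrations, the paper's is shorter but leans on \prop{p:f=0}.
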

\begin{proof} For the first part, from \eqref{axi1} we obtain
\[
a(\phi) = a(\phi_0) \frac{\sin \phi_0 }{\sin \phi}.
\]
So, unless $a = 0$ everywhere, we obtain a singular solution. If however $a=0$ everywhere, then \eqref{axi4} implies $fb = 0$. Suppose $f(\phi_0) \neq 0$, and hence by continuity $b = 0$ in a neighborhood of $\phi_0$. In that neighborhood $p'=0$ as implied by  \eqref{axi3}, so $p = p_0$,  a constant. Then $f = \const$ too. This implies that the condition $f \neq 0$ spreads to the entire sphere. Hence the solution is radial. The opposite case $f = 0$ is excluded by \prop{p:f=0}.

If $\a = 0$, then \eqref{aH} implies that unless $H=0$ throughout, $H$ must be constant. The description follows from \prop{p:irrot}.
\end{proof}

We now will give a complete description of solutions with constant spherical pressure $p$. It is not immediate that solutions are geodesic because the global pressure $P$ is not constant for $\a \neq 0$ unless $p=0$. However, the pressure does disappear from the momentum last two equations of \eqref{e:axi} which makes the classification possible. The general case remains open. 

\begin{proposition}\label{p:axip} Axisymmetric $C^1$-solutions with $p=\const$ are geodesic and are given by one of the solutions in the family \eqref{e:Vaxi} (in which case $\a \leq -2$ with swirl, and $\a\leq -1$ without), or by the radial solution \eqref{e:rad} in the case $\a =2$.
\end{proposition}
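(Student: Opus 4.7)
My plan is to first dispose of the edge cases $\a \in \{0,1,2\}$: the case $\a=1$ is ruled out by \prop{p:a=1}, while $\a=0, 2$ are covered by \prop{l:axirad}, forcing the radial solution \eqref{e:rad} at $\a=2$ and reducing $\a=0$ to an irrotational solution that is absorbed into \eqref{e:Vaxi} at zero swirl. Henceforth I assume $\a\notin\{0,1,2\}$.

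With $p'=0$ in \eqref{axi3}, adding $a$ times \eqref{axi3} to $b$ times \eqref{axi4} cancels the $\cot\phi$ terms and produces the transport law $(1-\a)fQ + \tfrac{1}{2} a Q' = 0$ for $Q = a^2+b^2$. Using \eqref{axi1} in the form $(2-\a)f/a = -(\ln|a\sin\phi|)'$ to eliminate $f$ and integrating yields a new conservation law $Q = C|a\sin\phi|^{2(1-\a)/(2-\a)}$. Coupled with the existing law \eqref{e:abA} to eliminate $b$, one arrives at
\[
|a\sin\phi|^{2/(2-\a)} = C\sin^2\phi - D,
\]
with $C,D \geq 0$. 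After rescaling $a$ so that $C=1$ and setting $D = b_0^2$ with $a_0^2+b_0^2 = 1$, the right-hand side coincides with $K = a_0^2\sin^2\phi - b_0^2\cos^2\phi$, whence $|a\sin\phi| = K^{(2-\a)/2}$. Equation \eqref{axi1} then gives $f = \mp\cos\phi\,K^{-\a/2}$, and \eqref{e:abA} gives $b = \pm b_0\,K^{(1-\a)/2}/\sin\phi$. A direct computation on this profile yields $af' = Q + \a f^2$, forcing $\a p = 0$, hence $p = 0$. Assembling $V = (v + f\nor)/|x|^\a$ in Cartesian coordinates reproduces precisely the family \eqref{e:Vaxi}.

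Since $P\equiv 0$ in $\R^3\setminus\{0\}$, the momentum equation reduces to $V\cdot\n V = 0$, which directly establishes the geodesic property. The $C^1$-smoothness thresholds follow from the behavior of $K^{-\a/2}$ at $K=0$: across the degenerate cone $|\cos\phi|=|a_0|$ (when $b_0\neq 0$), $C^1$-regularity requires $\a\leq -2$; without swirl the cone collapses to the poles and only the weaker bound $\a\leq -1$ is needed. The main technical obstacle is verifying the identity $af' = Q + \a f^2$ on the derived profile, which is what secures consistency of \eqref{axi2} with $p=0$; the remaining steps are routine manipulations in the $K$-variable.
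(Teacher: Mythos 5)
Your derivation is essentially the paper's: both proofs reduce the classification to two first integrals of the ODE system \eqref{e:axi} and then read off $\a p_0=0$ from \eqref{axi2}. The packaging differs slightly. The paper uses \eqref{e:abA} together with a direct integration of \eqref{axi3} to produce \eqref{e:ab}--\eqref{e:f}; you instead form $a\cdot\eqref{axi3}+b\cdot\eqref{axi4}$ to get the transport law $(1-\a)fQ+\tfrac12 aQ'=0$ for $Q=|v|^2$, integrate it against \eqref{axi1} to obtain $Q=C|a\sin\phi|^{2(1-\a)/(2-\a)}$, and then eliminate $b$ with \eqref{e:abA}. This is a clean alternative second integral (note it is specific to $p'=0$ and is not the Bernoulli law \eqref{aH}, since it involves $|v|^2$ rather than $H$), and the algebra checks out: it does produce $|a\sin\phi|^{2/(2-\a)}=C\sin^2\phi-D$ and hence the $K$-profile of \eqref{e:Vaxi}. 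One bookkeeping caveat: you cannot simultaneously normalize $C=1$ and $|v(\pi/2)|=a_0^2+b_0^2=1$; the two normalizations differ by the scaling symmetry $(v,f,p)\mapsto(\l v,\l f,\l^2 p)$, so state which one you use when matching constants against \eqref{e:Vaxi}.

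The one genuine gap is your treatment of the degenerate set. Every identity you integrate requires $a\neq0$ (and \eqref{e:abA} requires $ab\neq0$), so your formula $W=C\sin^2\phi-D$ holds only on each connected component of $\{ab\neq0\}$, with a priori different constants $C,D$ on different components. You must rule out (i) an interval on which $a\equiv0$ but $b\not\equiv0$ (a pure-rotation band: here \eqref{axi3} with $p'=0$ gives $b^2\cot\phi=0$, which kills it off the equator), (ii) the isolated degeneracy $a(\pi/2)=0$, $b(\pi/2)\neq0$, which the paper excludes with a separate Claim via a geodesic-uniqueness argument, and (iii) the possibility of several disjoint bands with different constants. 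All three can in fact be closed with your own conservation law: for $\a<2$ the exponent $2/(2-\a)$ is positive, so $W\to0$ at any interior endpoint where $a\to0$, forcing $C\sin^2\phi-D$ to vanish at both ends of a component; hence each component is symmetric about the equator, there is exactly one, and the scenario (ii) forces $W=-C\cos^2\phi\le0$ near the equator, a contradiction. But as written your proof silently assumes the nondegenerate configuration, and this step is where the actual work of the paper's proof lives; it should be made explicit.
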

\begin{proof} Since the case $\a=2$ has been handled by \lem{l:axirad} we can assume $\a \neq 2$. Since we don't know a priori if $a$ or $b$ vanish somewhere, let us look into those cases separately.

Let us denote
\[
R(\phi,\phi_0) = \frac{\sin \phi }{\sin \phi_0}.
\]
Let us assume that at some $0<\phi_0<\pi$, $b(\phi_0) = 0$, no swirl. Then the orbit of $v$ through that point is a part of the corresponding meridian, and thus $b = 0 $ on that orbit. Solving \eqref{e:axi} we obtain  explicitly:
\begin{equation}\label{e:b=0,a=2}
a(\phi) = a(\phi_0) R^{1-\a}(\phi, \phi_0), \quad f = - a(\phi_0) \cot \phi R^{1-\a}(\phi, \phi_0), \quad \a p_0 = 0.
\end{equation}
This identifies the solution as a parallel shear flow \eqref{e:psf1} - \eqref{e:psf2} with constant $z$, which is a part of \eqref{e:Vaxi} family.

\begin{claim} If $a(\phi_0) = 0$ then $v(\phi_0) = 0$. 
\end{claim} Indeed, unless, $\phi_0 = \pi/2$, we have $b(\phi_0) = 0$ straight from the third of \eqref{e:axi}. If  $\phi_0 = \pi/2$, and if $b(\phi_0) \neq 0$, then the equator is the orbit. Pick a $\phi_n = \pi/2 + \frac{1}{n}$. For large $n$ by continuity $v(\phi_n) \neq 0$, so the orbit through $\phi_n$ is a non-trivial part of a geodesic. Clearly one end of this geodesic orbit must land at a latitude closer to the equator than the original $\phi_n$ (the geodesics cannot cross by uniqueness). At that point $\pi/2<\phi_n'<\pi/2 + \frac{1}{n}$, $b(\phi_n') = 0$. Taking the limit we have $b(\pi/2) = 0$, which is a contradiction.
Thus, in either case $a(\phi_0) = 0$ implies $v(\phi_0) = 0$. 

Now let us assume that $v_0=v(\phi_0) \neq 0$, and $b_0 \neq 0$. In this case  the entire system \eqref{e:axi} can be solved explicitly with help of \eqref{e:abA}. The computation is routine. We use \eqref{e:abA} to solve through \eqref{axi3} to obtain
\begin{equation}\label{e:ab}
\begin{split}
a(\phi) &= \frac{\sign(a(\phi_0)) }{|a(\phi_0)|^{1-\a} R(\phi,\phi_0)} \left[ |v(\phi_0)|^2 R^2(\phi,\phi_0)  - b^2(\phi_0) \right]^{\frac{2-\a}{2}} \\
b(\phi) &= \frac{b(\phi_0)}{|a(\phi_0)|^{1-\a} R(\phi,\phi_0)} \left[ |v(\phi_0)|^2 R^2(\phi,\phi_0)  - b^2(\phi_0) \right]^{\frac{1-\a}{2}},
\end{split}
\end{equation}
and plugging it into  \eqref{axi1} we find
\begin{equation}\label{e:f}
f(\phi) = -  \frac{\sign(a(\phi_0))|v(\phi_0)|^2 R(\phi,\phi_0)}{|a(\phi_0)|^{1-\a}} \cot \phi  \left[ |v(\phi_0)|^2 R^2(\phi,\phi_0)  - b^2(\phi_0) \right]^{-\frac{\a}{2}}.
\end{equation}
From \eqref{axi2} we finally find
\begin{equation}\label{e:ap}
\a p_0 = 0.
\end{equation}
The solution is valid as long as $|v(\phi_0)|^2 R^2(\phi,\phi_0)  - b^2(\phi_0)  >0$. This region in terms of $\phi$ is symmetric with respect to $\phi = \frac{\pi}{2}$. First, this means that there is only one band of geodesics in which $v \neq 0$. Second, resetting $\phi_0$ to $\frac{\pi}{2}$, and rescaling $v(\pi/2) = a_0 \ef + b_0 \eth$ to magnitude $1$, and rewriting \eqref{e:ab} - \eqref{e:f} in Cartesian coordinates we arrive precisely at \eqref{e:Vaxi}. Inside the cone the solution must vanish. This describes the solution completely.
\end{proof}
\begin{remark}
Finally, we remark that the with the help of first laws \eqref{e:abA} and \eqref{aH} the system \eqref{e:axi} reduces to a system of two ODEs, for example, on $(f,a)$. One can rewrite it as a Hamiltonian non-autonomous system. It could be possible to exclude solutions that are not already described in this section. For instance, solution without swirl satisfy 
\begin{equation}\label{}
\begin{split}
x' &= (\a-2) f \\
f' & = \a B |x|^{\frac{4}{\a-2}}x + (1-\a) \frac{x}{1-t^2},
\end{split}
\end{equation}
where $x = a \sin \phi$, and $t = - \cos \phi$, $-1 < t<1$. The Hamiltonian is given by $\mathcal{H}(t,x,f) = (a-2) f^2 +(1-\a) \frac{x^2}{1-t^2} + (2-\a) B |x|^{\frac{2\a}{\a-2}}$. It is a Lyapunov function for the system on intervals $(-1,0]$ and $[0,1)$. 
Numerical computations show that unless $\a$ is an integral and solution is irrotational corresponding to the central harmonic $f = Y^0_{1-\a}$, generically $x \neq 0$ at $t = \pm 1$, which implies that $a \ra \infty$, hence excluded as non-smooth. We will perform more close analysis of this case in the near future.
\end{remark}

\subsection{Relation to Landau solutions}\label{s:landau} Even though for $\a=1$ there are no smooth solutions, for the Navier-Stokes equation the scaling of $\a=1$ is the only one possible. Axisymmetric homogeneous solutions for Navier-Stokes were found by Landau in his little known paper \cite{landau}, see also Batchelor's text \cite{batchelor} with physical insight into Landau solutons. They have been revisited recently in the work of Sverak \cite{sverak}, who showned that \emph{any} smooth homogeneous solutions for the Navier-Stokes equation are Landau. The proof uses maximum principle to find that $v$ is irrotational and the potential function $\f$, $v = \n \f$, satisfies a constant curvature equation for a conformally equivalent metric. The corresponding (anti)conformal transformation of the sphere given by a conjugate to the simple scalar multiplication via the stereographic projection yields the explicit solution of Landau.  One might consider the question of vanishing viscosity limit in which a possibility exists of obtaining singular solutions to the Euler system from smooth solutions to the Navier-Stokes. Unfortunately this is not the case. Let us discuss it in more detail.

We consider axisymmetric solutions without swirl for $\a=1$. So, we let all ingredients depend only on $\phi$, and $v^\th = 0$. Consider the Stokes stream-function $\psi(\phi)$:
\begin{equation}\label{e:axiStokes}
f = \frac{1}{\sin \phi} \psi', \quad a = -\frac{1}{\sin \phi} \psi.
\end{equation}
Then $2p + (v^\phi)^2 = \const$, and the system \eqref{e} integrates into
\begin{equation}\label{e:x}
\psi^2 = Ax^2+ Bx+C,
\end{equation}
where $x = \cos \phi$, and $A,B,C\in\R$. To ensure positivity of the right hand side of \eqref{e:x}, we have 
\[
\left\{ \begin{aligned}
B^2 &\leq 4AC \\
C &\geq 0
\end{aligned}\right.
\text{ or }
\left\{ \begin{aligned}
|B| &\leq A+C \\
|B| &\geq 2A
\end{aligned}\right. .
\]
This gives a family of axisymmetric solutions, expectedly singular. More directly, viewing $\psi$ as a function of $x$, in order for \eqref{e:x} to give smooth functions we need $\psi(\pm 1) = \psi'(\pm 1) = 0$, which yields $A=B=C=0$. Let us recall that the Landau solutions satisfy (see Bachelor \cite{batchelor} eq. (4.6.8)):
\begin{equation}\label{}
\psi^2 -2 \nu (1-x^2) \psi' - 4\nu x \psi = Ax^2+ Bx+C,
\end{equation}
where $\nu >0$ is viscosity, and $\psi'$ is with respect to $x$. As argued in \cite{batchelor}, unless $A=B=C=0$ the solutions are singular as well. So, the only way to restore solutions to Euler via vanishing viscosity limit is through a sequence of singular solutions. Otherwise, smooth Landau solutions converge to trivial $0$ as $\nu \ra 0$.

\section{Relation to Onsager's conjecture}

We cannot rule out smooth solutions in many scalings, among which the case $\a = \frac{2}{3}$ stands out. In this case the field $V$  lends itself into the so-called Onsager-critical homogenous Besov space $\dot{B}^{1/3}_{3,\infty}$. This field therefore provides a candidate for energy flux anomaly, whose existence is asserted in the classical Onsager's conjecture. The globally homogeneous field $V$, however, shows $1/3$ critical smoothness both at the small scales, namely at the origin, and at the large scales, namely at infinity. Moreover it belongs to no $L^p$-space in $\R^3$. We will therefore modify the field $V$ in order to only create a solution with small scale singularity at the origin, locally $C^\infty$ away from the origin, and with a compact support. This field, denoted $\bar{V}$, along with the associated pressure $\bar{P}$ will satisfy a forced Euler system with force $F \in C^\infty_{\loc}(\R^3)$ and $F \sim 1/|x|^3$ at infinity. The new field $\bar{V}$ has globally finite energy, we investigate a possibility for the energy flux anomaly. The anomaly occurs when for such a solution we have a non-zero work of force (while being stationary),
\begin{equation}\label{e:flux}
\Pi = \int_{\R^3} F \cdot \bar{V} dx \neq 0.
\end{equation}
In order to properly truncate the field $V$ while preserving the divergence-free condition we will make use of a stream-field, analogous to stream-function in 2D.

\subsection{Stream-field} Despite that $V$ is defined on a non-simply connected domain we can still construct, at least for any $\a \neq 2$, a so-called stream-field $\Psi$ satisfying
\begin{equation}\label{e:curl}
V = \curl \Psi, \quad \Psi = \frac{1}{|x|^{\a-1}}( \psi + h \nor), \quad \diver \Psi = 0,
\end{equation}
where $\psi$ is the tangential and $h$ is the vertical components. The system  \eqref{e:curl} is equivalent to
\begin{subequations}\label{e:curlsys}
\begin{align}
v &= (2-\a) \psi^\perp - \n^\perp h \label{e:curlsys1}\\
f &= \star d \psi \label{e:curlsys2}\\
(3-\a) h + \diver \psi &= 0. \label{e:curlsys3}
\end{align}
\end{subequations}
Let us focus on the first two equations first. Since $(2-\a) f = - \diver v$, and $\a \neq 2$ we see that $\int f d\s = 0$, which means that, as on any compact orientable manifold, the form $f d\s$ is exact. So, there is $\psi$ so that $f d\s= d \psi$. This satisfies \eqref{e:curlsys2}. Using \eqref{e1} we have 
\[
\d (v -(2-\a) \psi^\perp) = -(2-\a) f - (2-\a) \star d \star \star \psi = -(2-\a) f + (2-\a) \star d \psi = 0.
\]
Hence, $v -(2-\a) \psi^\perp$ is co-exact as a form. This implies the existence of $h$ to satisfy \eqref{e:curlsys1}. Now that the first two equations in \eqref{e:curlsys} being satisfied, let us notice that $\psi$ can be changed by an exact form, i.e. $\psi + d\f$ will do as well, for any $\f$, and $h$ can be changed by a constant. Adjusting $h$ by a constant to satisfy 
\[
\int ((3-\a)h + \diver \psi) d\s = 0,
\]
we can guarantee that the Poisson equation
\[
\D \f = -(3-\a)h - \diver \psi
\]
has a solution. With the new $\psi = \psi_{old} + d\f$ this implies 
\eqref{e:curlsys3}, i.e. $ \Psi$ is divergence-free on $\R^3 \bs \{0\}$.

\subsection{Tapering the field} Let $V, P$ be given by \eqref{e:VP}, $\a\neq 2$, and let $\Psi$ be a stream-field of $V$. Let $\f(r)$ be given by $\frac{1}{r^{\a -1}}$ for $r<1$, $\f = 0$ for $r >2$, and $\f$ be radial and smooth in the ring $1\leq r \leq 2$. Let 
$\bar{\Psi} = \f ( \psi + h \er)$ and $\bar{V} = \curl \bar{\Psi}$. Finally, let $\tilde{P} = \f P$. Clearly, the pair $(\bar{V}, \tilde{P})$ is supported within $r\leq 2$, and coincides with $(V,P)$ in the unit ball. This implies in particular that $(\bar{V}, \tilde{P})$ satisfies the same Euler system in the unit ball. We now find a global pressure $\bar{P}$ which complements the pair $(\bar{V},\bar{P})$ to a solution on the whole space but with additional smooth divergence-free force $F$:
\begin{equation}\label{e:barE}
\begin{split}
\bar{V} \cdot \n \bar{V} + \n \bar{P} & = F \\
\diver \bar{V} & = 0.
\end{split}
\end{equation}
We will look for $\bar{P}$ in the form $\bar{P} = \tilde{P} + P_0$, where $P_0$ is a corrector pressure to be found. Taking the divergence of \eqref{e:barE} we read off  the following Poisson equation for $P_0$:
\begin{equation}\label{}
\D P_0 = Q = \left\{
\begin{split}
&0, & r<1\\
&- \D \tilde{P} - \diver \diver  \bar{V} \otimes \bar{V}, & 1 \leq r \leq 2\\
&0, & r >2.
\end{split}\right.
\end{equation}
Thus, a solution is given by the classical convolution with the Newton potential, while the gradient satisfies
\begin{equation}\label{}
\n P_0 (x)= c \int_{\R^3} \frac{x-y}{|x-y|^3} Q(y) dy.
\end{equation}
Note that $P_0$ is locally a $C^\infty$ function, as $Q$ is. Moreover, $Q$ is mean-zero,
\begin{equation}\label{}
\int Q(y) dy = \int_{1\leq |y| \leq 2} \diver( - \n \tilde{P} - \bar{V} \cdot \n \bar{V}) dy = \int_{\S^2} (\n \tilde{P} + \bar{V} \cdot \n \bar{V}) \cdot \nu d\s = 0,
\end{equation}
the latter being trivial in view of $(\tilde{P}, \bar{V})$ satisfying the Euler equation pointwise on the sphere. Therefore, for large $x$ we have
\[
\n P_0(x)= c \int_{\R^3}\left( \frac{x-y}{|x-y|^3} - \frac{x}{|x|^3} \right) Q(y) dy \sim \frac{1}{|x|^3}.
\]
Similarly, $\n^k P_0(x) \sim \frac{1}{|x|^{3+k}}$ for all $k \in \N$. We thus see that the pair $(\bar{V},\bar{P})$ satisfies \eqref{e:barE} with $F$ being
\begin{equation}\label{e:F}
F = \left\{ 
\begin{split}
& \n P_0,& r<1 \\
&\bar{V} \cdot \n \bar{V} + \n \bar{P},& 1\leq r \leq 2 \\
&\n P_0,& r>2.
\end{split}\right.
\end{equation}
So, $F \in C^\infty_\loc (\R^3)$ and in addition 
\begin{equation}\label{}
\n^k F(x) \sim \frac{1}{|x|^{3+k}} ,\text{ for all } k=0,1,\ldots,  \text{ as } x \ra \infty.
\end{equation}
This lands the force into the natural Sobolev spaces $W^{k,p}$ for all  $p>1$.

\subsection{Absence of flux anomaly} Let $\a = \frac23$. We have a solution to the Euler system \eqref{e:barE} with a smooth decaying force and point singularity  at the origin and $\bar{V} \in B^{1/3}_{3,\infty}(\R^3)$ with compact support. Let us find a formula for the flux \eqref{e:flux}. From the formula for the force \eqref{e:F} via integration by parts we obtain,
\begin{multline}
\int F \cdot \bar{V} dx = \int_{|x| <1} \n P_0 \cdot V dx + \int_{1\leq |x| \leq 2} (\frac12 \bar{V} \cdot \n |\bar{V}|^2 + \n (P_0 + \tilde{P}) \cdot \bar{V} )dx \\
= \int_{|x|=1} P_0 V \cdot \nu - \int_{|x|=1}(P_0 + P) V \cdot \nu - \int_{|x|=1} \frac12 |V|^2 V \cdot \nu = - \frac12 \int_{\S^2} f H d\s.
\end{multline}
Thus,
\begin{equation}\label{}
\Pi = - \frac12 \int_{\S^2} f H d\s.
\end{equation}
\begin{lemma}\label{l:fH}
For any $\a \in \R$ and smooth solution \eqref{e:VP} we have
\begin{equation}\label{e:fH}
\int_{\S^2} f H^n d\s = 0,
\end{equation}
for all $n \in \N$, and even for $n = 0$ if $\a \neq 2$.
\end{lemma}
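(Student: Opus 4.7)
The plan is to apply the divergence theorem on the closed surface $\S^2$ to a vector field of the form $g(H)v$ for suitable smooth $g$, exploiting the two structural identities $\diver v = -(2-\a)f$ from \eqref{e1} and $v\cdot\n H = 2\a fH$ from \eqref{e:H}. A direct expansion gives
\[
\diver\bigl(g(H)v\bigr) = g'(H)\,v\cdot\n H + g(H)\,\diver v = f\bigl[\,2\a H g'(H) - (2-\a)g(H)\,\bigr],
\]
and since $\S^2$ is closed the integral of the left-hand side vanishes, producing the master relation
\[
\int_{\S^2} f\bigl[\,2\a H g'(H) - (2-\a)g(H)\,\bigr]\,d\s = 0. \qquad(\ast)
\]

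Taking $g(H) = H^n$ collapses $(\ast)$ to $(2\a n+\a-2)\int f H^n\,d\s = 0$. For $n = 0$ the coefficient is $\a-2$, yielding the $n=0$ part of the lemma, and for $n \geq 1$ the coefficient vanishes only at the isolated borderline scalings $\a_n := 2/(2n+1)$, handling the main conclusion away from these values.

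The remaining work is at the borderline $\a = \a_n$, where I would upgrade to the regularized test function $g_\e(H) = \tfrac12 H^n\log(H^2+\e)$, smooth for every $\e > 0$. A direct computation shows
\[
2\a H g_\e'(H) - (2-\a)g_\e(H) = \tfrac12(2\a n+\a-2)\,H^n\log(H^2+\e) + \frac{2\a H^{n+2}}{H^2+\e},
\]
so at $\a = \a_n$ the logarithmic piece is annihilated by the exceptional relation, and $(\ast)$ reduces to $\int_{\S^2} f H^{n+2}/(H^2+\e)\,d\s = 0$ for every $\e > 0$. The integrand is dominated by $|f||H|^n \in L^1(\S^2)$ and converges pointwise to $f H^n$ (equal to $H^n$ wherever $H\neq 0$, and vanishing together with $fH^n$ on $\{H = 0\}$ since $n\geq 1$), so dominated convergence delivers $\int f H^n\,d\s = 0$.

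The one delicate point is that $H$ genuinely has a zero set on $\S^2$: by \eqref{e:uv1} we have $\a H = (u\times v)\cdot\nor$, which must vanish at the zeros of the tangent field $v$, and such zeros are forced by the hairy-ball theorem. This is precisely why the logarithmic regularization is needed, but once it is in place the zero set causes no trouble for the dominated convergence step, so the borderline case is the main item to attend to rather than a genuine obstacle.
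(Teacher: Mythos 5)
Your proof is correct, and its skeleton is the same as the paper's: pair the transport law $v\n H = 2\a fH$ with the divergence relation $\diver v = -(2-\a)f$ and integrate over the closed surface, which kills $\int f H^n d\s$ for every $n$ except the single borderline scaling $\a = 2/(2n+1)$ (and gives the $n=0$ case for $\a\neq 2$). Where you genuinely diverge is at that borderline. The paper exploits the already-established identities $\int f H^{n_0+k}d\s = 0$, $k\geq 1$, to conclude $\int f H^{n_0} G(H)\,d\s = 0$ for analytic $G$ with $G(0)=0$, then takes $G(x) = 1 - e^{-x^2/\e}$ and lets $\e\to 0$ to recover the integral over $\{H\neq 0\}$; you instead build a single $C^1$ test function $g_\e(H) = \tfrac12 H^n\log(H^2+\e)$ whose logarithmic part is annihilated precisely by the exceptional relation $2\a n + \a - 2 = 0$, leaving $\int f H^{n+2}/(H^2+\e)\,d\s = 0$, and then pass to the limit by dominated convergence. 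Both arguments turn on the same final observation --- that $fH^n$ vanishes on the zero set of $H$ for $n\geq 1$, a set which is genuinely nonempty since \eqref{e:uv1} forces $H$ to vanish wherever $v$ does --- so neither is stronger than the other; yours is marginally more self-contained (one test function, one limit, no appeal to summing a power series under the integral), while the paper's recycles the non-borderline cases and so requires no new computation. Your computation of $2\a H g_\e'(H) - (2-\a)g_\e(H)$ checks out, and the domination $|fH^{n+2}/(H^2+\e)|\leq |f||H|^n$ is valid, so the argument is complete.
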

Incidentally, the case of interest $\a = \frac23$, $n = 1$ appears to be critical in the following proof of the lemma. Clearly, if $\a=2$, the radial solution is a counterexample for \eqref{e:fH}, $n=0$.
\begin{proof} Multiplying \eqref{H} with $H^{n-1}$, $n \in \N$ and integrating over the sphere we obtain $\int f H^n d\s= 0$, for all $n \in \N$ except a possible $n_0$ for which $\a = \frac{2}{1+2n_0}$. To prove the identity for $n=n_0$ we argue as follows. We have $\int f H^{n_0} H^k d\s = 0$,
for all $k =1,2,\ldots$. Consequently, $\int f H^{n_0} G(H) d\s = 0$, for all real analytic functions $G$ with $G(0) = 0$. Letting $G(x) = 1- e^{-x^2/\e}$ and letting $\e \ra 0$ we obtain
$\int_{H \neq 0} f H^{n_0} d\s = 0$. However, on the set $\{H = 0\}$ the integral vanishes trivially. 

When $\a \neq 2$ we also have $\int f d\s = 0$ directly from \eqref{e1}.
The lemma is proved.
\end{proof}

\begin{remark}
Multiplying \eqref{e:H3} with $u$ we obtain
\begin{equation}\label{Hu}
u\n H = 2\a  \o H.
\end{equation}
Similarly to the argument above, we also have
\begin{equation}\label{e:gH}
\int \o H^n d\s= 0,
\end{equation}
for all $n\in \N$, and since $\o d \s = d v$ we have \eqref{e:gH} for $n=0$ by Stokes' Theorem.
\end{remark}

\section{Appendix: glossary of terms}  All facts from differential geometry used  in the text can be found, for instance, in \cite{rosen}. System \eqref{e} can be easily derived from \eqref{e:ES} by applying the following formulas (see also \cite{sverak}). If $u,v \in T\S^2$ and $f \in C^1(\S^2)$ are $0$-homogeneous on $\R^3\bs\{0\}$, then 
\[
\begin{split}
\n_{\R^3} (f / |x|^\a) & = \frac{1}{|x|^{\a+1}} (\n_{\S^2} f - \a f )\, \nor\\
u \cdot \n_{\R^3} v& = \frac{1}{r} (u \n_{\S^2} v - (u \cdot v) \nor ) \\
v \cdot \n_{\R^3} (f \nor) &= \frac{1}{r} ( vf+ (v\n_{\S^2} f )\, \nor) .
\end{split}
\]
Recall the Riemannian metric tensor $g = \sin^2\phi\, d\th^2 + d\phi^2$.  Let us write $v =  v^\phi \p_\phi + v^\th \p_\th $ in local spherical coordinates. The transformation formulas into the unit coordinate frame $v = a \ef + b \eth$ are
\begin{equation}
a =  v^{\phi}, \quad b = \sin \phi\, v^\th.
\end{equation}
The dual form to $v$ is given by $v^\flat = (\sin\phi)^2 v^\th d\th + v^\phi d\phi = b \sin \phi\, d\th + a \, d\phi$. The 2D ``vorticity" discussed in the text is given by the scalar function $\o = \star d v^\flat$, where $\star$ is the Hodge-star operation. Thus, $dv^\flat = \o d\Vol$, where $d\Vol = \sin\phi\, d\phi \wedge d\th$.  So, $\o = b_\phi + b \cot \phi - a_\th (\sin\phi)^{-1}$. We adopt the 1D adjoint to $d$, $\d = \star d \star$, so that $\d v^\flat = \diver v$. Finally, for a scalar function $f$ on $\S^2$ we use negative definite Laplacian $\D f = \d d f$.



\begin{thebibliography}{10}

\bibitem{batchelor}
G.~K. Batchelor.
\newblock {\em An introduction to fluid dynamics}.
\newblock Cambridge Mathematical Library. Cambridge University Press,
  Cambridge, paperback edition, 1999.

\bibitem{BrShv}
Anne Bronzi and Roman Shvydkoy.
\newblock On the energy behavior of locally self-similar blowup for the euler
  equation.
\newblock to appear in the Indiana University Mathematics Journal.

\bibitem{ChShv}
Dongho Chae and Roman Shvydkoy.
\newblock On formation of a locally self-similar collapse in the incompressible
  {E}uler equations.
\newblock {\em Arch. Ration. Mech. Anal.}, 209(3):999--1017, 2013.

\bibitem{CS}
A.~Cheskidov and R.~Shvydkoy.
\newblock Euler equations and turbulence: analytical approach to intermittency.
\newblock {\em SIAM J. Math. Anal.}, 46(1):353--374, 2014.

\bibitem{ds}
Camillo De~Lellis and L{\'a}szl{\'o} Sz{\'e}kelyhidi, Jr.
\newblock Dissipative continuous {E}uler flows.
\newblock {\em Invent. Math.}, 193(2):377--407, 2013.

\bibitem{isett}
Philip Isett.
\newblock {\em Holder continuous {E}uler flows with compact support in time}.
\newblock ProQuest LLC, Ann Arbor, MI, 2013.
\newblock Thesis (Ph.D.)--Princeton University.

\bibitem{kor}
Mikhail Korobkov, Konstantin Pileckas, and Remigio Russo.
\newblock The {L}iouville theorem for the steady-state {N}avier-{S}tokes
  problem for axially symmetric 3{D} solutions in absence of swirl.
\newblock {\em J. Math. Fluid Mech.}, 17(2):287--293, 2015.

\bibitem{landau}
L.~Landau.
\newblock A new exact solution of {N}avier-{S}tokes equations.
\newblock {\em C. R. (Doklady) Acad. Sci. URSS (N.S.)}, 43:286--288, 1944.

\bibitem{2d-homo}
Xue Luo and Roman Shvydkoy.
\newblock 2{D} {H}omogeneous {S}olutions to the {E}uler {E}quation.
\newblock {\em Comm. Partial Differential Equations}, 40(9):1666--1687, 2015.

\bibitem{LX}
R.~Monneau, L.~Paszkowski, and L.~Xue.
\newblock Notes of self-similar singular solutions for the {E}uler equations.
\newblock preprint, personal communication.

\bibitem{onsager}
L.~Onsager.
\newblock Statistical hydrodynamics.
\newblock {\em Nuovo Cimento (9)}, 6(Supplemento, 2(Convegno Internazionale di
  Meccanica Statistica)):279--287, 1949.

\bibitem{rosen}
Steven Rosenberg.
\newblock {\em The {L}aplacian on a {R}iemannian manifold}, volume~31 of {\em
  London Mathematical Society Student Texts}.
\newblock Cambridge University Press, Cambridge, 1997.
\newblock An introduction to analysis on manifolds.

\bibitem{shv-lectures}
Roman Shvydkoy.
\newblock Lectures on the {O}nsager conjecture.
\newblock {\em Discrete Contin. Dyn. Syst. Ser. S}, 3(3):473--496, 2010.

\bibitem{sverak}
V.~{\v{S}}ver{\'a}k.
\newblock On {L}andau's solutions of the {N}avier-{S}tokes equations.
\newblock {\em J. Math. Sci. (N. Y.)}, 179(1):208--228, 2011.
\newblock Problems in mathematical analysis. No. 61.

\end{thebibliography}

\end{document}